\newcommand{\RE}{\mathrm{Re}\,}
\newcommand{\im}{\mathrm{Im}\,}
\newcommand{\Aut}{\mathrm{Aut}}
\newcommand{\Isot}{\mathrm{Isot}}
\newcommand{\RR}{\mathbb{R}} 
\newcommand{\CC}{\mathbb{C}} 
\newcommand{\BB}{\mathbb{B}}
\newcommand{\PP}{\mathbb{P}}
\newcommand{\beq}{\begin{eqnarray*}}
\newcommand{\eeq}{\end{eqnarray*}}
\newcommand{\beg}{\begin{equation}}
\newcommand{\eeg}{\end{equation}}
\newtheorem{thm}{Theorem}[section]
\newtheorem{cor}[thm]{Corollary}
\newtheorem{lem}[thm]{Lemma}
\newtheorem{prop}[thm]{Proposition}
\theoremstyle{definition}
\newtheorem{definition}[thm]{Definition}
\theoremstyle{remark}
\newtheorem{remark}[thm]{Remark}
\newtheorem{example}[thm]{Example}
\numberwithin{equation}{section}
\newcommand{\Rmnum}[1]{\expandafter\@slowromancap\romannumeral #1@}
\title{proper holomorphic polynomial maps between bounded symmetric domains of classical type}
\date\today
\author{Aeryeong Seo}
\address{School of Mathematics,
Korea Institute for Advanced Study (KIAS)
85 Hoegiro, Dongdaemun-gu, Seoul 130-722, Korea}
\email{Aileen83@kias.re.kr}
\subjclass[2010]{Primary 32M15, 32H35}
\keywords{Bounded symmetric domain, classical type, proper holomorphic map}
\begin{document}
\maketitle

\markboth{Aeryeong Seo}{proper holomorphic polynomial maps between BSDs of classical type }

\begin{abstract}
We prove that two proper holomorphic polynomial maps
between bounded symmetric domains of classical type
which preserve the origin are equivalent if and only if they are isotropically equivalent.
Using this property we show that each member of a one-parameter family of maps from \cite{Seo}
is inequivalent.
\end{abstract}

\section{Introduction}
Let $\Omega_1$, $\Omega_2$ be domains in $\CC^n$ and $\CC^N$ and $f,\, g : \Omega_1 \rightarrow \Omega_2$ be holomorphic maps. We say that $f$ is \textit{proper} if $f^{-1}(K)$ is compact for every compact subset $K\subset \Omega_2$.
We say that $f$ and $g$ are \textit{equivalent} if and only if $f = A\, \circ\, g \,\circ B$ for some $B\in \Aut(\Omega_1)$ and $A\in \Aut(\Omega_2)$. For a domain $\Omega$, denote the group of automorphisms fixing $p\in \Omega$ by $\text{Isot}_p(\Omega)$.
Suppose that for fixed $p\in \Omega_1$, $f(p)=g(p)$.
Then we say that $f$ and $g$ are \textit{isotropically equivalent at p} if there are $U\in \text{Isot}_p(\Omega_1)$
and $V\in \text{Isot}_{g(p)}(\Omega_2)$ such that $f = V\circ g \circ U$.
The notion of isotropic equivalence coincides with that of unitary equivalence of \cite{D'Angelo_Michigan}
defined when $\Omega_1$ and $\Omega_2$ are balls.
The following domains are called bounded symmetric domains of classical type:
\begin{enumerate}
\item $\Omega_{r,s}^\Rmnum{1} = \{ Z\in M_{r,s}^\CC : I_r -  ZZ^* >0 \}$,  where $s \geq r = \text{rank}(\Omega^\Rmnum{1}_{r,s})$. \label{1}
\vskip 0.1cm
\item $\Omega_{n}^{\Rmnum{2}} = \{ Z\in  M_{n,n}^\CC : I_n - ZZ^*>0, \,\, Z^t = -Z \}$,\,
$\text{rank}(\Omega_{n}^{\Rmnum{2}})= \big[\frac{n}{2}\big]$. \label{2}
\vskip 0.1cm
\item $\Omega_{n}^{\Rmnum{3}} = \{ Z\in M_{n,n}^\CC : I_n - ZZ^*  >0,\,\, Z^t = Z\}$,\,
$\text{rank}(\Omega_{n}^{\Rmnum{3}})= n$. \label{3}
\vskip 0.1cm
\item $ \Omega_n^{\Rmnum{4}} = \big\{ Z=(z_1,\dots, z_n)\in \CC^n : ZZ^* <1 ,\, 0< 1-2 ZZ^*+ \big| ZZ^t \big|^2  \big\}$,\, $\text{rank}(\Omega^{\Rmnum{4}})=2$. \label{4}
\end{enumerate}
Here we denote by $M >0$ positive definiteness of square matrix $M$, by $M_{r,s}^\CC$ the set of $r\times s$ complex matrices and by $I_{r}$ the $r\times r$ identity matrix.

The aim of this paper is to prove the following theorems as a generalization of the results in \cite{D'Angelo_Michigan} which are concerned with the proper holomorphic polynomial maps between balls.
\begin{thm} \label{main theorem}
Let $\Omega_1,\,\Omega_2$ be bounded symmetric domains of classical type and $f,\,g : \Omega_1 \rightarrow \Omega_2$ proper holomorphic polynomial maps such that $f(0)=g(0)=0$.
Then $f$ and $g$ are equivalent if and only if they are isotropically equivalent at $0$.
\end{thm}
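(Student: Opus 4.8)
The plan is to prove the nontrivial implication: if $f=A\circ g\circ B$ with $A\in\Aut(\Omega_2)$ and $B\in\Aut(\Omega_1)$, then in fact $A\in\Isot_0(\Omega_2)$ and $B\in\Isot_0(\Omega_1)$, so that $f=A\circ g\circ B$ is \emph{itself} an isotropic equivalence (the converse is immediate, since the isotropy groups are subgroups of the automorphism groups). By H.~Cartan's linearity theorem, each domain in its given (circular) realization has the property that $\Isot_0(\Omega_i)$ consists exactly of the \emph{linear} automorphisms; equivalently, an automorphism $\phi$ is linear iff $\phi(0)=0$ iff $\phi^{-1}(0)=0$. Thus everything reduces to the key claim: \emph{if $g:\Omega_1\to\Omega_2$ is a proper polynomial map with $g(0)=0$ and $A,B$ are automorphisms with $A\circ g\circ B$ polynomial and $(A\circ g\circ B)(0)=0$, then $A$ and $B$ fix the origin.}

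To set this up I would record the classical rational normal form of automorphisms and the role of the generic norm. For each classical domain there is a polynomial $N_i(Z,W)$ (for types I--III, $N(Z,W)=\det(I-ZW^*)$; type IV uses the quadratic expression appearing in the definition of $\Omega_n^{\Rmnum{4}}$) with $N_i(Z,W)\neq 0$ for $Z,W\in\Omega_i$, with $\partial\Omega_i=\{N_i(Z,Z)=0\}\cap\overline{\Omega_i}$, and such that every $\phi\in\Aut(\Omega_i)$ is a rational map whose reduced denominator is $N_i(Z,\phi^{-1}(0))$ up to a constant (for type I this is the identity $\det(CZ+D)=\mathrm{const}\cdot\det(I-Z\,b^*)$ via Sylvester, with $b=\phi^{-1}(0)$). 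Consequently $\phi$ is polynomial iff $\phi^{-1}(0)=0$, and the pole locus $V_\phi=\{N_i(Z,\phi^{-1}(0))=0\}$ is nonempty and disjoint from $\overline{\Omega_i}$ precisely when $\phi$ is not linear. I would also use that properness sends $\partial\Omega_1$ into $\partial\Omega_2$, so $N_2(g(Z),g(Z))$ vanishes on $\{N_1(Z,Z)=0\}$; polarizing (and using irreducibility of $N_1$) yields the divisibility relation $N_2(g(Z),g(W))=N_1(Z,W)\,Q(Z,\overline W)$ with $Q$ polynomial, which records that the zeros of $g$ over interior points lie inside $\Omega_1$.

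The heart of the argument is a pole analysis of $h:=A\circ g\circ B$, which is proper $\Omega_1\to\Omega_2$ and is assumed polynomial, hence holomorphic on all of $\CC^n$ with no poles. As a rational map $h$ can acquire poles only from the pole locus $V_B$ of $B$ and from the pullback $\{Z:g(B(Z))\in V_A\}$ of the pole locus of $A$, both of which lie outside $\overline{\Omega_1}$ since $g$ maps $\overline{\Omega_1}$ into $\overline{\Omega_2}$ while $V_A,V_B$ avoid the closures. The mechanism I want to exploit, already visible for the disk, is that the zeros of $h$ (preimages of interior points) lie inside $\Omega_1$ by properness, whereas the candidate poles lie strictly outside $\overline{\Omega_1}$, so no cancellation is available and a genuine pole survives unless the automorphisms are linear. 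Concretely, if $B$ is not linear, pick a generic $Z_0\in V_B$; then $B(Z)\to\infty$ along the direction $v_0\neq 0$ given by the coprime numerator of $B$ at $Z_0$, and properness (via the boundary relation, which forbids $g$ from being constant along the relevant directions) forces $g(B(Z))\to\infty$; one then shows this blow-up is not reabsorbed by $A$, producing a genuine pole of $h$. Symmetrically, if $A$ is not linear, one locates $Z_0\notin\overline{\Omega_1}$ with $g(B(Z_0))\in V_A$ and shows $h$ blows up there. Either way $h$ is not polynomial, a contradiction; hence $A,B$ are linear and the theorem follows.

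The step I expect to be the main obstacle is exactly the non-cancellation: verifying that the blow-up along $V_B$ (respectively along the $A$-pullback locus) is killed neither by the numerator of the composite nor by the behaviour of the remaining automorphism near infinity. This requires controlling the interaction between the top-degree part of $g$ and the pole directions of the automorphisms, and it is here that properness of $g$ is indispensable --- to exclude degenerate directions (lines contained in fibers) via the boundary correspondence --- together with the explicit classical description of $N_i$ and of the automorphism denominators. I also anticipate that the non-equidimensional case (where $g(\Omega_1)$ may be a proper subvariety of $\Omega_2$, so that one must still guarantee the image meets $V_A$) and the type IV domain (whose norm is the quadratic $1-2ZZ^*+|ZZ^t|^2$ rather than a determinant) will each demand separate, more delicate handling.
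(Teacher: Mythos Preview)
Your setup coincides with the paper's: both use the generic norm $N_i$, the polarized divisibility relation $N_2(g(Z),g(W))=N_1(Z,W)\,Q(Z,\bar W)$ coming from properness and irreducibility of $N_1$, and the explicit rational form of automorphisms with denominator governed by $N_i(Z,\phi^{-1}(0))$. The difference is in how the non-cancellation is achieved.

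You propose a direct pole analysis of the composite $A\circ g\circ B$, treating $A$ and $B$ symmetrically and tracking blow-up loci through $g$; you correctly flag the simultaneous control of numerator--denominator interaction as the crux. The paper instead breaks the symmetry with one algebraic trick that largely dissolves that difficulty. Writing $g\circ U=V\circ f$, it combines the polarized divisibility for $f$ and for $g$ with the explicit transformation law $N_i(\phi(Z),\phi(W))=N_i(Z,W)\,F_\phi(Z,W)$ (your denominator identity, recorded as a separate lemma) to obtain a single master identity in $(Z,W)$; for type~I this reads
\[
\det\bigl(I-f(Z)f(W)^*\bigr)=\det\bigl(I-(g\circ U)(Z)\,(g\circ U)(W)^*\bigr)\cdot\det(V_1+f(Z)V_3)\,\overline{\det(V_1+f(W)V_3)}.
\]
Now set $W=0$. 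Since $f(0)=0$, the left side collapses to $1$, and the last factor is a \emph{polynomial} in $Z$. The middle factor is rational in $Z$ with reduced denominator a power of $\det(U_1+ZU_3)$. Your ``non-cancellation'' then becomes the one-line remark that a nonconstant reduced $p/q$ times a polynomial cannot equal $1$; this forces $(g\circ U)(0)=0$, hence $V(0)=V(f(0))=(g\circ U)(0)=0$, so $V$ is in the isotropy. Once $V$ is linear, $g\circ U=V\circ f$ is itself polynomial, and a short residual pole argument (now with only one automorphism in play) gives $U_3=0$.

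So what your approach attempts in one step---controlling poles of $A\circ g\circ B$ with both automorphisms present---the paper does in two decoupled steps, and the $W=0$ specialization is exactly the device that makes the first step routine. Your route is not wrong in principle, but carrying it out would amount to reproving this decoupling by hand inside the pole analysis; the substitution does it for free. (The case splitting you anticipate for type~IV and for non-equidimensional targets is also present in the paper, which writes out only type~I and leaves the others as parallel computations using the corresponding $F_\phi$.)
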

\begin{thm} \label{non-equiv theorem}
There are uncountably many inequivalent proper holomorphic maps from $\Omega_{r,s}^\Rmnum{1}$ to $\Omega_{2r-1,2s}^\Rmnum{1}$ for $r\geq 2$, $s\geq 2$.
\end{thm}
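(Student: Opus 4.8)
The plan is to combine Theorem~\ref{main theorem} with the explicit one–parameter family constructed in \cite{Seo}. Denote that family by $\{f_\theta\}_{\theta\in I}$, where $I$ is a nondegenerate real interval and each $f_\theta:\Omega_{r,s}^\Rmnum{1}\to\Omega_{2r-1,2s}^\Rmnum{1}$ is a proper holomorphic polynomial map with $f_\theta(0)=0$. Since $I$ is uncountable, it suffices to prove that $f_\theta$ and $f_{\theta'}$ are inequivalent whenever $\theta\neq\theta'$. By Theorem~\ref{main theorem}, two such origin–preserving proper polynomial maps are equivalent if and only if they are isotropically equivalent at $0$; hence I only need to rule out the existence of $U\in\Isot_0(\Omega_{r,s}^\Rmnum{1})$ and $V\in\Isot_0(\Omega_{2r-1,2s}^\Rmnum{1})$ with $f_\theta=V\circ f_{\theta'}\circ U$.

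The decisive simplification is that for a type~\Rmnum{1} domain the isotropy subgroup at the origin acts \emph{linearly}: every $U\in\Isot_0(\Omega_{r,s}^\Rmnum{1})$ has the form $Z\mapsto AZD$ with $A\in U(r)$, $D\in U(s)$, and every $V\in\Isot_0(\Omega_{2r-1,2s}^\Rmnum{1})$ has the form $W\mapsto RWS$ with $R\in U(2r-1)$, $S\in U(2s)$ (there is no transpose ambiguity, since $2r-1$ and $2s$ have opposite parity). Writing each map as a sum of homogeneous parts, $f_\theta=\sum_k f_\theta^{(k)}$, the relation $f_\theta=V\circ f_{\theta'}\circ U$ splits into one equation per degree,
\begin{equation}\label{eq:degreewise}
f_\theta^{(k)}(Z)=R\,f_{\theta'}^{(k)}(AZD)\,S\qquad\text{for all }Z\text{ and all }k,
\end{equation}
because a linear $V$ preserves the degree of each component and a linear $U$ preserves the degree of the argument.

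I would then extract from \eqref{eq:degreewise} scalar invariants that are insensitive to the four unitary factors $A,D,R,S$ but detect $\theta$. The natural tool is the Frobenius norm, for which $\|RWS\|=\|W\|$ and $\|AZD\|=\|Z\|$. Setting
\begin{equation}\label{eq:norm}
M_k(\theta):=\max_{\|Z\|=1}\big\|f_\theta^{(k)}(Z)\big\|,
\end{equation}
the quantity $M_k(\theta)$ is invariant under $Z\mapsto AZD$ (a Frobenius isometry of the unit sphere), so taking the supremum over $\|Z\|=1$ in \eqref{eq:degreewise} forces $M_k(\theta)=M_k(\theta')$ for every $k$. From the explicit shape of the \cite{Seo} family the first degree carrying the parameter is the quadratic part, and $M_2(\theta)$ is a strictly monotone function of $\theta$ on $I$ (for the family at hand essentially a multiple of $\cos^2\theta$ or $\sin^2\theta$). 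Monotonicity yields $\theta=\theta'$, and since $I$ is uncountable we obtain uncountably many pairwise inequivalent maps.

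The main obstacle is the last step: I must exhibit a quantity built from $f_\theta^{(2)}$ that is genuinely invariant under the full two–sided unitary action of $A,D$ on the source and $R,S$ on the target, yet still separates the parameters. The two–sided freedom is large, so normalizing at a single test matrix need not suffice, and $M_2$ as in \eqref{eq:norm} is the safe but coarse choice; a more robust route is to track the entire Hermitian form $Z\mapsto f_\theta^{(2)}(Z)\,f_\theta^{(2)}(Z)^*$ and compare its spectral data (eigenvalue multiplicities, or traces of its powers), since these are precisely the $U(2r-1)$–conjugacy invariants surviving the action of $R$, while the dependence on $Z$ is governed up to $Z\mapsto AZD$. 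Verifying that this spectral data depends injectively on $\theta$ for the \cite{Seo} family—and in particular that the change of variables $Z\mapsto AZD$ cannot absorb a change in $\theta$—is the one computation that must be carried out in full.
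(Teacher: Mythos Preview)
Your reduction via Theorem~\ref{main theorem} and the degree-by-degree splitting of $f_\theta = V\circ f_{\theta'}\circ U$ into homogeneous components matches the paper exactly. After that point the paper proceeds differently and more concretely: rather than seeking global unitary invariants like your $M_k$, it evaluates the \emph{linear} homogeneous part of the family at specific rank-one test matrices (the elementary matrices $E_{11}$, $E_{12}$, \dots) and reads off constraints on the individual entries of $A,D,R,S$, forcing first that $A,D$ are diagonal and then that the parameters coincide. For the family $G_t$ of \eqref{generalized whitney map} both the linear and the quadratic parts carry the parameter (the linear part through the column $\sqrt{1-t}\,z_{i1}$), so your assertion that ``the first degree carrying the parameter is the quadratic part'' is inaccurate here.

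Your invariant route is sound in principle, but as written it is not a proof: you explicitly defer the decisive monotonicity check and then hedge between $M_2$ and spectral data of $f_\theta^{(2)}(Z)f_\theta^{(2)}(Z)^*$. In fact the computation is short for this family: the quadratic block of $G_t$ has $(i,j)$-entry $\sqrt{t}\,z_{i1}z_{1j}$, so
\[
\|G_t^{(2)}(Z)\|^2 \;=\; t\Big(\sum_{i=1}^r |z_{i1}|^2\Big)\Big(\sum_{j=1}^s |z_{1j}|^2\Big),
\]
whose maximum over the Frobenius unit sphere is $t$ (attained at $Z=E_{11}$), giving $M_2(t)=\sqrt t$, which is strictly monotone and finishes the argument. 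By contrast one computes $\|G_t^{(1)}(Z)\|^2 = 2 - t\sum_i|z_{i1}|^2 - \sum_j|z_{1j}|^2$ on the unit sphere, so $M_1(t)=\sqrt{2}$ is \emph{constant} in $t$; the choice of degree and of invariant genuinely matters, and had you worked with the linear part your proposed $M_k$ would have failed. The paper's test-point method sidesteps such choices at the price of a short entry-chasing computation, while your method is more conceptual but only becomes a proof once the specific invariant is actually computed and shown to be injective in the parameter.
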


The motivation of this paper comes from generalizing the study on proper holomorphic maps between balls to bounded symmetric domains of rank greater than or equal to two.
Proper holomorphic maps between balls have been studied for a long time
since Alexander (\cite{Alexander}) proved that every proper holomorphic self-map of the $n$-dimensional unit ball $\BB^n$ with $n\geq 2$ is a holomorphic automorphism.
For proper holomorphic maps between balls with different dimensions, much work has been done, relating the maximum degree of proper holomorphic maps to the difference of dimensions between the domain ball and the target ball.
As the first work along these lines, Webter (\cite{Webster}) proved that any proper holomorphic map from $\BB^n$ to $\BB^{n+1}$ with $n\geq 3$, $C^3$-smooth up to the boundary, is equivalent to the embedding
$$f_s : (z_1,\dots,z_n)\mapsto (z_1,\dots,z_n,0).$$
 Given a proper holomorphic map $f$ from $\BB^n$ to $\BB^N$, we consider a proper holomorphic map from $\BB^n$ to $\BB^{N+k}$ defined by $z\mapsto (f(z),0,\dots,0)$ with $k$-zeros for which we will use the same notation $f$ if there is no confusion.
When $n\geq 3$ and $ N\leq 2n-2$, Faran (\cite{Faran2}) showed that it is equivalent to $f_s$ if it is extended holomorphically over the boundary.
Furthermore he precisely classified the equivalence classes of proper holomorphic maps from
$\BB^2$ to $\BB^3$ which is $C^3$-smooth up to the boundary (\cite{Faran}).
In \cite{Forstneric}, Forstneri\v{c} proved that any proper holomorphic map from $\BB^n$ to $\BB^N$
which is $C^{N-n+1}$-smooth up to the boundary is a rational map $(p_1,\dots, p_N)/q$
where $p_j$ and $q$ are holomorphic polynomials of degree at most $N^2(N-n+1)$.
Since this work has been done, much results fit into the framework of providing sharp bounds in special situations. See \cite{D'Angelo_Lebl,D'Angelo_Lebl_Peters,Lebl_Peters}, for more details.
If $N=2n-1$, there is a proper holomorphic map which is called the Whitney map $f_w : \BB^n \rightarrow \BB^{2n-1}$ defined by
$$ f_w(z_1,\dots,z_n) = (z_1,\,\dots,z_{n-1},\, z_nz_1,\, z_nz_2,\,\dots,\, z_n^2).$$
It is inequivalent to the embedding $f_s$.
Moreover Huang and Ji (\cite{Huang_Ji}) proved that any proper rational map from $\BB^n$ to $\BB^{2n-1}$ with $n \geq 3$ is equivalent to $f_s$ or $f_w$ and any proper holomorphic embedding which is $C^2$-smooth up to the boundary is equivalent to $f_s$.
If the dimension of the target domain is larger than $2n$, there are infinitely many inequivalent proper holomorphic maps. For example,
$f_\theta : \BB^n\rightarrow \BB^{2n}$ given by
 \begin{equation} \label{D'Angelo}
 f_\theta(z) = \left(z_1,\,\dots,\, z_{n-1},\, \cos\theta z_n, \,\sin\theta z_1z_n,\,\dots,\,\sin\theta z_nz_n\right)
 \end{equation}
with $0\leq \theta\leq \frac{\pi}{2}$ are found by D'Angelo (\cite{D'Angelo_Michigan}).
In \cite{D'Angelo_Michigan}, D'Angelo showed that any two proper holomorphic polynomial map from $\BB^n$ to $\BB^N$ preserving the origin are equivalent if and only if they are isotropically equivalent at the origin 
and as a consequence \eqref{D'Angelo} are inequivalent for all $0\leq \theta\leq \frac{\pi}{2}$.
Interestingly, Hamada (\cite{Hamada}) showed that any proper rational map from $\BB^n$ to $\BB^{2n}$ with $n\geq 4$ is equivalent to $f_\theta$ for some $\theta$, $0\leq \theta \leq \pi/2$ and Huang, Ji and Xu (\cite{Huang_Ji_Xu}) showed that any proper holomorphic map from $\BB^n$ to $\BB^N$ with $4\leq n \leq N\leq 3n-4$ which is $C^3$-smooth up to the boundary should be equivalent to $f_\theta$ for some $\theta$, $0\leq \theta \leq \pi/2$.
Recently Huang-Ji-Yin (\cite{Huang_Ji_Yin}) proved that any proper rational map from $\BB^n$ to $\BB^N$ with $n\geq8 $ and $3n+1 \leq N\leq 4n-7$ should be equivalent to proper rational map from $\BB^n$ to $\BB^{3n}$.

As a one generalization of the unit ball, one consider bounded symmetric domains which are Hermitian symmetric spaces of non-compact type with non-smooth boundaries. There are several rigidity theorems on proper holomorphic maps between bounded symmetric domains of rank greater than or equal two. In contrast with the case of the unit balls, the difference of the ranks between the domains is more crucial than that of the dimensions.
The first result on bounded symmetric domains along these lines is the following
which is due to Tsai.
Let $f : \Omega_1\rightarrow \Omega_2$ be a proper holomorphic map between irreducible bounded symmetric domains $\Omega_1$ and $\Omega_2$.
If $\text{rank}(\Omega_1) \geq \text{rank}(\Omega_2) $,
then $\text{rank}(\Omega_1)  = \text{rank}(\Omega_2) $ and
$f$ should be a totally geodesic isometric embedding with respect to the Bergman metrics on the domains (see \cite{Tsai}).
 If $\Omega _1 = \Omega_{r,r-1}^\Rmnum{1}$ and $\Omega_2= \Omega_{r,r}^\Rmnum{1}$,
 then $f$ is also a totally geodesic isometric embedding (see \cite{Tu}),
 although the rank of $\Omega_2$ is larger than that of $\Omega_1$.
Furthermore, Ng (\cite{Ng}) showed that for $f:\Omega_{r,s}^\Rmnum{1}\rightarrow \Omega_{r',s'}^\Rmnum{1}$, if $s\geq 2$, $s\geq r' \geq r$ and $r' \leq 2r-1$, then $f$ is equivalent to the embedding, $Z\mapsto \left(
                                                                                \begin{array}{cc}
                                                                                  Z & 0 \\
                                                                                  0 & 0 \\
                                                                                \end{array}
                                                                              \right)$.
If the difference of the ranks of the domains gets bigger, then it is expected that there are lots of inequivalent proper holomorphic maps. In \cite{Seo}, one way of finding proper holomorphic maps between bounded symmetric domains of type $\Rmnum{1}$ is suggested and several proper holomorphic maps are constructed.
For example, for $r'=2r-1$ and $s'=2s-1$, there is a generalized Whitney map defined by
\begin{equation}\label{generalized Whitney map}
 \left(
  \begin{array}{ccc}
    z_{11} & \dots & z_{1s} \\
    \vdots & \ddots & \vdots \\
     z_{r1} & \dots & z_{rs} \\
  \end{array}
\right)
\mapsto \left(
  \begin{array}{ccccccc}
    z_{11}^2  & z_{11}z_{12}&\dots& z_{11}z_{1s}& z_{12}& \dots & z_{1s}   \\
    z_{11} z_{21} & z_{21}z_{12}& \dots& z_{21}z_{1s} &z_{22}& \dots& z_{2s}  \\
    \vdots & \vdots &\ddots & \vdots&\vdots& \ddots &\vdots \\
    z_{11}z_{r1} & z_{r1}z_{12} &\dots & z_{r1}z_{1s}& z_{r2}& \dots& z_{rs} \\
    z_{21} & z_{22} &\dots & z_{2s}& 0& \dots & 0 \\
     \vdots& \vdots & \ddots& \vdots & \vdots& \ddots & \vdots\\
     z_{r1}& z_{r2}&\dots &z_{rs}&0& \dots& 0\\
  \end{array}
\right).
\end{equation}

In this paper, as a one step to observe analogous phenomenon on proper holomorphic maps between bounded symmetric domains of rank greater than or equal to two,
we generalize the result of D'Angelo in \cite{D'Angelo_Michigan} to the domains of classical type.
\subsection*{Acknowledgement}{
This research was supported by National Researcher Program of the National Research Foundation (NRF)
funded by the Ministry of Science, ICT and Future Planning(No.2014028806).
}

\section{Preliminaries}
In this section, we introduce terminology and some basic background.
A bounded domain $\Omega$ is called \textit{symmetric} if for each $p\in \Omega$,
there is a holomorphic automorphism $\mathfrak{i}_p$
such that $\mathfrak{i}_p^2$ is the identity map of $\Omega$ which has $p$ as an isolated fixed point.
All bounded symmetric domains are homogeneous domain, i.e. the automorphism group acts transitively on the domain.
In 1920's, Cartan classified all irreducible bounded symmetric domains. There are 4 classical types
and 2 exceptional types. The four classical types are given by \eqref{1},\eqref{2},\eqref{3} and \eqref{4} in the introduction.
Note that $\Omega_{m,1}^\Rmnum{1}$ is the m-dimensional unit ball and $\Omega_1^{\Rmnum{3}}$ is the unit disc.

From now on, we will use the notation $M = \left(
                                 \begin{array}{cc}
                                   A & B \\
                                   C & D
                                    \\
                                 \end{array}
                               \right)\in  GL(r+s,\CC)$ to split $M$ into 4 block matrices with $A\in M^\CC_{r,r}$, $B\in M^\CC_{r,s}$,
                               $C\in M^\CC_{s,r}$ and $D\in M^\CC_{s,s}$.
We will denote by $ASM^\CC_{n,n}$ the set of anti-symmetric complex $n\times n$ matrices
and by $SM^\CC_{n,n}$ the set of symmetric complex $n \times n$ matrices.

Let $U(r,s)$ be the subgroup of $GL(r+s,\CC)$ satisfying $$ M \left(
                                 \begin{array}{cc}
                                   -I_r & 0 \\
                                   0 & I_s \\
                                 \end{array}
                               \right) M^* = \left(
                                 \begin{array}{cc}
                                   -I_r & 0 \\
                                   0 & I_s \\
                                 \end{array}
                               \right)$$ for all $M\in U(r,s)$.
Let $SU(r,s)$ be the subset of $U(r,s)$ which consists of the matrices with determinant one.
Explicitly,
\begin{equation}\label{SU(r,s)}
\begin{split}
SU(r,s) = \bigg\{ \left(
              \begin{array}{cc}
               A & B \\
              C & D \\
           \end{array}
        \right)\in &SL(r+s,\CC) : AA^*-BB^* = I_r, \\
        &\quad AC^* = BD^*,\,\, CC^* - DD^* = -I_s \bigg\}.
\end{split}
\end{equation}
Let $O(n+2,\CC)$ be the complex orthogonal group of $(n+2)\times (n+2)$ matrices.

Since every bounded symmetric domain is Hermitian symmetric space of non-compact type,
the domain can be canonically embedded into the corresponding compact dual (Borel embedding)
and every holomorphic automorphism of the domain can be extended to the automorphism of its compact dual.
For example, let $G_{r,s}$ be the Grassmannian of $r$-planes in $r+s$ dimensional complex vector space $\mathbb{C}^{r+s}$ which is the compact dual of $\Omega_{r,s}^\Rmnum{1}$.
For $X\in M^\CC_{r,r+s}$ of rank $r$, denote $[X]$ the $r$-plane in $\mathbb{C}^{r+s}$ which is generated by row vectors of $X$.
For $M\in GL(r+s,\CC)$, $M$ acts on $ G_{r,s}$ by $[X]\in G_{r,s}  \mapsto [XM]$.
Then the Borel embedding $\xi^\Rmnum{1}$ of $\Omega_{r,s}^\Rmnum{1}$ is given by
$$\xi^\Rmnum{1}(Z)= [I_r,Z] \in G_{r,s}$$
and $M\in U(r,s)$ acts on $\Omega_{r,s}^\Rmnum{1}$ by $g_M(Z) := {\xi^\Rmnum{1}}^{-1}\left( \xi(Z)M \right)$. Explicitly, for $M= \left(
              \begin{array}{cc}
               A & B \\
              C & D \\
           \end{array}
        \right) \in U(r,s)$, $M$ acts on
$\Omega_{r,s}^{\Rmnum{1}} $
by
\begin{equation}\label{auto123}
Z\mapsto (A + ZC)^{-1}(B + ZD).
\end{equation}
Similarly, for $M=\left(
              \begin{array}{cc}
               A & B \\
              C & D \\
           \end{array}
        \right)\in U(n,n)$ acts on $\Omega_n^{\Rmnum{2}}$ and $\Omega_n^{\Rmnum{3}}$ by \eqref{auto123}.

In case of $\Omega_n^{\Rmnum{4}}$, the explicit expression of the holomorphic automorphism is little more messy.
The compact dual of $\Omega_n^{\Rmnum{4}}$ is the hyperquadric $H_n$ in $\PP^{n+1}$ given by
$H_n := \{ [z_1,\dots,z_{n+2}] \in \PP^{n+1} :  z_1^2 + \dots + z_n^2 - z_{n+1}^2 - z_{n+2}^2=0 \} $.
Then the Borel embedding $\xi^{\Rmnum{4}}$ is given by
\begin{equation}
\xi^{\Rmnum{4}}(Z) = [ -2iZ, 1+ZZ^t, i(1-ZZ^t)] \in H_n
\end{equation}
For $M=\left(
              \begin{array}{cc}
               A & B \\
              C & D \\
           \end{array}
        \right) \in O(n+2,\CC) \cap U(n,2)$, then $M$ acts on $\Omega_n^{\Rmnum{4}}$ by
$$ Z=(z_1,\dots, z_n) \mapsto \frac{1}{(-2iZB + Z'D)(i, 1)^t}(2iZA -Z'C)$$
 where $Z' =( 1+ ZZ^t, \,i-iZZ^t)$.

The automorphism groups of classical domains and their isotropy groups at the origin are given by
the following:
\begin{enumerate}
\item $\Aut\left(\Omega_{r,s}^{\Rmnum{1}}\right) = U(r,s), \,\,
 \Isot\left(\Omega_{r,s}^{\Rmnum{1}}\right) = \bigg\{  \left(
              \begin{array}{cc}
               U & 0 \\
              0 & V \\
           \end{array}
        \right) :  U\in U(r),\, V\in U(s) \bigg\}$
\item $\Aut\left(\Omega_{n}^{\Rmnum{2}}\right) = \bigg\{ M \in U(n,n) : M^t\left(
                                                                           \begin{array}{cc}
                                                                             0 & I_n \\
                                                                             I_n & 0 \\
                                                                           \end{array}
                                                                         \right)M = \left(
                                                                           \begin{array}{cc}
                                                                             0 & I_n \\
                                                                             I_n & 0 \\
                                                                           \end{array}
                                                                         \right)
 \bigg\}$, \\
        $\Isot\left(\Omega_{n}^{\Rmnum{2}}\right) = \bigg\{ \left(
              \begin{array}{cc}
               A & 0 \\
              0 & \overline{A} \\
           \end{array}
        \right) :  A \in U(n) \bigg\}$
\item $ \Aut\left(\Omega_{n}^{\Rmnum{3}}\right) = \bigg\{ M \in U(n,n) : M^t\left(
                                                                           \begin{array}{cc}
                                                                             0 & I_n \\
                                                                             -I_n & 0 \\
                                                                           \end{array}
                                                                         \right)M = \left(
                                                                           \begin{array}{cc}
                                                                             0 & I_n \\
                                                                             -I_n & 0 \\
                                                                           \end{array}
                                                                         \right)
 \bigg\}$, \\
        $\Isot\left(\Omega_{n}^{\Rmnum{3}}\right) = \bigg\{  \left(
              \begin{array}{cc}
               A & 0 \\
              0 & \overline{A} \\
           \end{array}
        \right) :  A \in U(n) \bigg\}$
\item $\Aut(\Omega_n^{\Rmnum{4}}) = O(n+2,\CC)\cap U(n,2),\quad \Isot(\Omega^{\Rmnum{4}}) = O(n)\times O(2)$
\end{enumerate}

\section{Isotropically equivalent proper holomorphic polynomial maps}
In this section, we will prove Theorem \ref{main theorem}.
Define a polynomial function $S_{r,s}^{\Rmnum{1}} : \Omega_{r,s}^{\Rmnum{1}}\rightarrow \RR $ by
 $$S_{r,s}^{\Rmnum{1}}(Z) = \det(I_r -ZZ^*)$$
 for $Z = (z_{ij}) \in M_{r,s}^\CC$, as a real polynomial in $\RE(z_{ij})$, $\im(z_{ij})$ where $1\leq i \leq r$, $1\leq j\leq s$.
$S_{r,s}^{\Rmnum{1}}$ is a polynomial of degree 2  in each $\RE(z_{ij})$, $\im(z_{ij})$.
In case of $\Omega^{\Rmnum{2}}_n$, it is known that $\det(I_r -ZZ^*) = s_n^{\Rmnum{2}}(Z)^2$ for some polynomial $s_n^{\Rmnum{2}}(Z)$ (cf.\cite{Loos}).
Define $S_n^{\Rmnum{2}}  : \Omega_n^{\Rmnum{2}} \rightarrow \RR$ and
$S_n^{\Rmnum{3}}  : \Omega_n^{\Rmnum{3}} \rightarrow \RR$ by
$$ S_n^{\Rmnum{2}} (Z) = s_n^{\Rmnum{2}}(Z) \,\,\text{ for } Z\in ASM^\CC_{n,n},$$
$$ S_n^{\Rmnum{3}} (Z) = \det(I_n -ZZ^*) \,\,\text{ for } Z\in SM^\CC_{n,n}$$
and
$$ S_n^{\Rmnum{4}}(Z) = 1-2 ZZ^* + \big| ZZ^t \big|^2 \,\,\text{ for } Z\in \CC^n$$
$S_{r,s}^{\Rmnum{1}}(Z)$, $S_n^{\Rmnum{2}} (Z)$, $S_n^{\Rmnum{3}} (Z)$ and $S_n^{\Rmnum{4}} (Z)$ are called the \textit{ generic norm } of the corresponding domains cf.\cite{Loos}.
Then $S_n^{\Rmnum{2}}(Z)$ is a polynomial of degree 2 in each $\RE{z_{ij}}, \im{z_{ij}}$ for $1 \leq i<j \leq n$
and $S_n^{\Rmnum{3}} (Z)$ is a polynomial of degree 4 in each $\RE{z_{ij}}, \im{z_{ij}}$ for $1\leq i<j \leq n$
and of degree 2 in each $\RE{z_{ii}}, \im{z_{ii}}$ for $1\leq i \leq n$.

\begin{lem} \label{coefficient}
\mbox{}
\begin{enumerate}
\item For $Z = (z_{ij})\in M_{r,s}^\CC$, the coefficient of $(\RE z_{ij})^2$ in $S_{r,s}^{\Rmnum{1}}(Z)$ is
$$-\det (I_{r-1} - Z'Z'^*)$$ where $Z'$ is the $(i,j)$ minor of $Z$. \label{I}
\item For $Z = (z_{ij})\in ASM_{n,n}^\CC$, the coefficient of $(\RE z_{ij})^4$,$1 \leq i<j \leq n$ in $S_{n}^{\Rmnum{2}}(Z)$ is
$$\det (I_{n-2} - Z''Z''^*)$$ where $Z''$ is $(n-2)\times (n-2)$ matrix obtained by removing
$i,j$-th rows and $i,j$-th columns in $Z$.\label{II}

\item For $Z = (z_{ij})\in SM_{n,n}^\CC$, the coefficient of $(\RE z_{ij})^4$,$1 \leq i<j \leq n$
 and $(\RE z_{ij})^2$ for $1\leq i\leq n$ in $S_{n}^{\Rmnum{3}}(Z)$ are
$$\det (I_{n-2} - Z''Z''^*) \quad \text{ and } \quad  -\det (I_{n-1} - Z'Z'^*)$$
respectively where $Z''$ is $(n-2)\times (n-2)$ matrix removing
$i,j$-th rows and $i,j$-th columns in $Z$ and $Z'$ is the $(i,i)$ minor of $Z$.\label{III}
\end{enumerate}
\end{lem}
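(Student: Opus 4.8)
The plan is to treat each generic norm as a polynomial in a single matrix entry (and its conjugate), pin down its degree by multilinearity of the determinant, and then read off the top coefficient, which will collapse to the stated principal minor. I begin with (1). Fix $(i,j)$, write $w=z_{ij}$, and let $Z_0$ be $Z$ with its $(i,j)$ entry set to $0$, so that $Z=Z_0+wE_{ij}$. Expanding gives
$$I_r-ZZ^* = N - w\,e_i v^* - \bar w\, v e_i^* - w\bar w\, e_ie_i^*,$$
where $N=I_r-Z_0Z_0^*$, the vector $e_i$ is the $i$th standard basis vector, and $v$ is the $j$th column of $Z_0$ (so $v_i=0$). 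Thus $w$ occurs only in row $i$ and $\bar w$ only in column $i$, in each entry to degree at most $1$; since $\det$ is linear in row $i$ and in column $i$, the function $S^{\Rmnum{1}}_{r,s}=\det(I_r-ZZ^*)$ is bi-affine in $(w,\bar w)$. Writing $w=\RE z_{ij}+i\,\im z_{ij}$, the linear terms cannot contribute to $(\RE z_{ij})^2$, while $w\bar w=(\RE z_{ij})^2+(\im z_{ij})^2$; hence the coefficient of $(\RE z_{ij})^2$ equals the coefficient of the monomial $w\bar w$, namely $\partial_w\partial_{\bar w}S^{\Rmnum{1}}_{r,s}$.

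To compute that coefficient I would apply the matrix determinant lemma to the rank $\le 2$ update above, $\det(I_r-ZZ^*)=\det N\cdot\det(I_2+\cdots)$; a short computation of the $2\times 2$ determinant shows the $w\bar w$-coefficient equals $\det N\,\big(|b|^2-a(1+c)\big)$ with $a=e_i^*N^{-1}e_i$, $b=v^*N^{-1}e_i$, $c=v^*N^{-1}v$. The decisive simplification is Sherman--Morrison: with $\tilde N:=N+vv^*$ one has $\det\tilde N=\det N\,(1+c)$ and $(\tilde N^{-1})_{ii}=a-|b|^2/(1+c)$, so
$$\det N\,\big(a(1+c)-|b|^2\big)=\det\tilde N\cdot(\tilde N^{-1})_{ii}=\det \tilde N_{\hat i\hat i},$$
the $(i,i)$ principal minor of $\tilde N$. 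But $Z_0Z_0^*$ is the sum over all columns $m$ of (column $m$)(column $m$)$^*$, so $\tilde N=I_r-Z^{(j)}Z^{(j)*}$, where $Z^{(j)}$ is $Z$ with column $j$ deleted; deleting in addition row $i$ leaves exactly $Z'$, whence $\det \tilde N_{\hat i\hat i}=\det(I_{r-1}-Z'Z'^*)$. Therefore the coefficient of $(\RE z_{ij})^2$ is $-\det(I_{r-1}-Z'Z'^*)$, proving (1). The conceptual payoff is that all dependence on the remaining entries of row $i$ and column $j$ cancels.

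For (2) and (3) I would run the same argument, the only change being that antisymmetry or symmetry forces the entry into two positions at once: put $\Theta=E_{ij}\mp E_{ji}$ and $Z=Z_0+w\Theta$, so that now $w$ occupies rows $i,j$ and $\bar w$ columns $i,j$. Then $\det(I_n-ZZ^*)$ is bi-quadratic in $(w,\bar w)$, and exactly as in the first paragraph the coefficient of $(\RE z_{ij})^4$ equals that of $w^2\bar w^2$. A rank $\le 4$ matrix-determinant-lemma computation, simplified by the same device---augmenting $N$ by the appropriate rank-two term to recover $I_n$ minus the (columns $i,j$ deleted) Gram matrix, then extracting the complementary $\{i,j\}$ principal minor by Jacobi's cofactor identity---yields $\det(I_{n-2}-Z''Z''^*)$; for type $\Rmnum{2}$ the relevant quartic coefficient is read off from $\det(I_n-ZZ^*)=\big(S_n^{\Rmnum{2}}\big)^2$. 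The diagonal statement in (3), the coefficient of $(\RE z_{ii})^2$, is literally the computation of (1) with $E_{ii}$ in place of $E_{ij}$ and $Z'$ the $(i,i)$ minor.

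I expect the only genuinely nonformal step to be this simplification: showing that the Gram-type expression $\det N\,\big(|b|^2-a(1+c)\big)$, and its quartic analogue, collapses to a single principal minor and that the apparent dependence on the other entries of the affected rows and columns disappears. Sherman--Morrison together with the cofactor reading of a deleted-column matrix is precisely what forces this cancellation, and the remaining care is bookkeeping of signs---producing $-\det(\cdots)$ in the bi-affine cases (1) and the diagonal of (3), and $+\det(\cdots)$ in the bi-quadratic cases (2) and (3).
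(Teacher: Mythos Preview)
Your argument for part (1) is correct and takes a genuinely different route from the paper. The paper differentiates the determinant directly: it writes $Z=\binom{X}{Y}$, computes $\partial_{y_1}\partial_{\bar y_1}\det(I_r-ZZ^*)$ by differentiating one row at a time, and then simplifies the resulting $r\times r$ determinant to $-\det(I_{r-1}-X''X''{}^*)$ by a single row operation. You instead express $I_r-ZZ^*$ as a rank-$2$ perturbation of $N=I_r-Z_0Z_0^*$, apply the matrix determinant lemma to reduce to a $2\times 2$ determinant, read off the $w\bar w$-coefficient, and then use Sherman--Morrison plus the cofactor interpretation of $(\tilde N^{-1})_{ii}$ to collapse everything to the stated principal minor. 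Both proofs are clean; yours makes the cancellation mechanism (why the remaining entries of row $i$ and column $j$ disappear) structurally transparent, at the cost of invoking more matrix identities.

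For parts (2) and (3), your rank-$4$ scheme with Jacobi's complementary-minor identity should work, but the paper's approach is considerably shorter and worth knowing: it simply \emph{iterates} part (1). Treat $Z$ temporarily as a general matrix $W\in M_{n,n}^{\CC}$ with $w_{ij}$ and $w_{ji}$ independent; then by (1) the coefficient of $(\RE w_{ij})^2$ in $\det(I_n-WW^*)$ is $-\det(I_{n-1}-W'W'{}^*)$, and applying (1) once more to $W'$ extracts the coefficient of $(\RE w_{ji})^2$, yielding $+\det(I_{n-2}-W''W''{}^*)$. Substituting $w_{ji}=\mp w_{ij}$ afterward identifies the $(\RE z_{ij})^4$-coefficient of $\det(I_n-ZZ^*)$ with this iterated minor. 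This avoids any rank-$4$ perturbation calculus entirely. Your direct approach and the paper's iterative one both land on the same principal minor, but the iteration is the lighter path.
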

\begin{proof}
\mbox{}

\textbf{\eqref{I}}. For $Z\in M_{r,s}^\CC$, denote $Z = \left(
                                     \begin{array}{c}
                                       X \\
                                       Y \\
                                     \end{array}
                                   \right)$ with $X =(X',X'') =(x_{ij})\in M_{r-1, s}^\CC$ where $X'\in M_{r-1,1}^\CC,\, X''\in M_{r-1,s-1}^\CC$ and $Y=(y_1,\dots, y_s)\in M_{1,s}^\CC$.
We only consider the coefficient of $\,(\RE y_1)^2$.
The coefficient of $(\RE y_1)^2$ in $S_{r,s}^{\Rmnum{1}}(Z)$ is $\frac{\partial^2}{\partial \overline{y}_1\partial y_1} S_{r,s}^{\Rmnum{1}}(Z)$. Since
\begin{eqnarray}
\frac{\partial}{\partial y_1}\det\left( I_{r} - ZZ^* \right) = \frac{\partial}{\partial y_1}\det \left(
                   \begin{array}{cc}
                     I_{r-1} - XX^* & -XY^* \\
                     -YX^* & 1- YY^* \\
                   \end{array}
                 \right)
\end{eqnarray}
and $YX^* = (y_1\overline{x}_{11} + \dots + y_s\overline{x}_{1s},\, \dots\,, y_1\overline{x}_{(r-1)1} + \dots + y_s\overline{x}_{(r-1)s})$, we obtain
$$
\frac{\partial}{\partial y_1}\det\left( I_{r} - ZZ^* \right) = \det  \left(
                   \begin{array}{cc}
                     I_{r-1} - XX^* & -XY^* \\
                     (-\overline{x}_{11},\, -\overline{x}_{21},\dots, -\overline{x}_{(r-1)1}) & -\overline{y}_1 \\
                   \end{array}
                 \right)
$$ and
\begin{eqnarray}
&&\frac{\partial^2}{\partial \overline{y}_1\partial y_1}\det\left( I_{r} - ZZ^* \right)\\
&=& \det \left(
                   \begin{array}{c|c}
                     I_{r-1} - XX^* &\left(
                                                        \begin{array}{c}
                                                          -x_{11} \\
                                                          \vdots \\
                                                          -x_{(r-1)1} \\
                                                        \end{array}
                                                      \right)
                      \\\hline
                     (-\overline{x}_{11},\, -\overline{x}_{21},\dots, -\overline{x}_{(r-1)1}) & -1 \\
                   \end{array}
                 \right)\label{first matrix}\\
&=& \det  \left(
                   \begin{array}{c|c}
                     I_{r-1} - X''X''^* &0
                      \\\hline
                     (-\overline{x}_{11},\, -\overline{x}_{21},\dots, -\overline{x}_{(r-1)1}) & -1 \\
                   \end{array}
                 \right)\nonumber\\
&=& -\det (I_{r-1} - X''X''^*).
\end{eqnarray}
The second equation comes from subtracting $j$-th row of \eqref{first matrix} by $x_{j1}$ times the $r$-th row of \eqref{first matrix}.

\textbf{\eqref{II}}.
We will only consider the coefficient of $(\RE{z_{12}})^4$. By Lemma \ref{coefficient}\eqref{I},
for $W = (w_{ij})\in M_{n,n}^\CC$, $\det(I_n - WW^*) = a_2 (\RE{w_{12}})^2 + a_1 (\RE{w_{12}}) + a_0$
where $a_i$ are polynomials in $\RE{w_{ij}}, \im{w_{ij}}$ for $i\neq 1, \, j\neq 2$ and $\im{w_{12}}$.
Since the coefficient of $(\RE{z_{12}})^4$ is the coefficient of $(\RE{w_{12}})^2 (\RE{w_{21}})^2$
substituted $w_{ij} = z_{ij}$ for $1\leq i <j<n$, $w_{ji} = -z_{ij}$ for $1\leq i<j\leq n$
and $w_{ii} = 0$ for $1\leq i\leq n$, the coefficient of $(\RE{z_{12}})^4$ is $\det(I_{n-2} - Z''Z''^*)$ where
$Z'' = (z_{ij})_{3\leq i\leq n,\, 3\leq j\leq n}$.

\eqref{III}. We can obtain the result by similar method in \eqref{I} and \eqref{II}.
\end{proof}

\begin{prop} \label{irreducible}
$S_{r,s}^{\Rmnum{1}}(Z)$, $S_n^{\Rmnum{2}}(Z)$, $S_n^{\Rmnum{3}}(Z)$ and $S_n^{\Rmnum{4}}(Z)$ are irreducible.
\end{prop}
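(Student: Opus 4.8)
The plan is to prove irreducibility of each generic norm as a polynomial in the real and imaginary parts of the matrix entries by exploiting the degree structure recorded in Lemma \ref{coefficient}. The key observation is that a reducible polynomial must factor, and controlling the degree of each variable severely restricts how such a factorization can look. First I would treat $S_{r,s}^{\Rmnum{1}}(Z)$. Suppose $S_{r,s}^{\Rmnum{1}} = PQ$ with $P,Q$ nonconstant real polynomials. Since $S_{r,s}^{\Rmnum{1}}$ has degree $2$ in each $\RE z_{ij}$ and $\im z_{ij}$, for every entry the factor degrees in that entry must split as $(2,0)$, $(1,1)$ or $(0,2)$. The strategy is to show the split $(1,1)$ cannot occur for the diagonal-type entries, using Lemma \ref{coefficient}\eqref{I}: the coefficient of $(\RE z_{ij})^2$ is $-\det(I_{r-1}-Z'Z'^*)$, which is itself (up to sign) a generic norm of a smaller domain. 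By induction on the size, a $(1,1)$ split would force this smaller generic norm to be a perfect square or a nontrivial product in a way incompatible with its own leading-coefficient structure, a contradiction.

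The cleaner route, which I would actually pursue, is to argue by specializing variables and tracking irreducibility of a suitable one-variable restriction. Fix a generic point and view $S_{r,s}^{\Rmnum{1}}$ as a quadratic polynomial in a single variable $t = \RE z_{ij}$: by Lemma \ref{coefficient}\eqref{I} its leading coefficient is $-\det(I_{r-1}-Z'Z'^*)$, and this quadratic has discriminant that is generically a nonzero non-square in the field of rational functions of the remaining variables. If $S_{r,s}^{\Rmnum{1}}$ factored, then for this factorization to be consistent over all entries simultaneously one of the factors would have to be independent of $t$, hence divide both the leading coefficient $-\det(I_{r-1}-Z'Z'^*)$ and the constant term; I would then run this simultaneously over all entries and conclude that any common factor must be a constant. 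Here the inductive hypothesis that $\det(I_{r-1}-Z'Z'^*)$ is irreducible (the smaller-domain statement) does the heavy lifting, since a genuine factor of $S_{r,s}^{\Rmnum{1}}$ dividing the leading coefficient in \emph{every} variable would have to divide every such minor determinant, which is impossible unless it is constant.

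For $S_n^{\Rmnum{3}}(Z)$ the same scheme applies, now using both parts of Lemma \ref{coefficient}\eqref{III}: the coefficient of $(\RE z_{ij})^4$ ($i<j$) is $\det(I_{n-2}-Z''Z''^*)$ and of $(\RE z_{ii})^2$ is $-\det(I_{n-1}-Z'Z'^*)$, so a putative factor must divide these smaller generic norms across all entries and is again forced to be constant. The case $S_n^{\Rmnum{2}}(Z)$ requires extra care because it is the \emph{square root} $s_n^{\Rmnum{2}}$ of a determinant; here I would use Lemma \ref{coefficient}\eqref{II}, that the coefficient of $(\RE z_{ij})^4$ in $S_n^{\Rmnum{2}}$ is $\det(I_{n-2}-Z''Z''^*)$, together with the fact that $S_n^{\Rmnum{2}}$ has degree $2$ (not $4$) in each such variable, to rerun the one-variable reducibility argument. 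For $S_n^{\Rmnum{4}}(Z) = 1 - 2ZZ^* + |ZZ^t|^2$ a direct low-degree computation suffices, since it is a concrete polynomial in $n$ variables whose irreducibility can be checked by inspecting its structure as a quadratic in a single $\RE z_i$.

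The main obstacle I anticipate is the type $\Rmnum{2}$ case: because one works with $s_n^{\Rmnum{2}}$ rather than the determinant itself, one cannot directly quote the determinant's leading-coefficient minors, and the relation $\det(I_n - ZZ^*) = (s_n^{\Rmnum{2}})^2$ must be used carefully to transfer the induction. The subtlety is to ensure that the smaller objects appearing as leading coefficients are again the \emph{irreducible} generic norms of type $\Rmnum{2}$ domains (so the induction is on $n-2$, not $n-1$, matching the antisymmetric structure), and to verify that the square-root extraction does not introduce a spurious factorization.
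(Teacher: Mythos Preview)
Your plan is essentially the paper's own argument: induction on the size of the matrix, using Lemma~\ref{coefficient} to identify the top coefficient in one real variable as (up to sign) a smaller generic norm, and then ruling out each possible degree split $(2,0)$ and $(1,1)$ of a putative factorization. Your remarks on the type~$\Rmnum{2}$ case are also exactly what the paper does: work with $(S_n^{\Rmnum{2}})^2=\det(I_n-ZZ^*)$, extract that the coefficient of $(\RE z_{ij})^2$ in $S_n^{\Rmnum{2}}$ is $\pm S_{n-2}^{\Rmnum{2}}(Z'')$, and induct in steps of two starting from explicit $n=2,3$ base cases.

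The one place where your write-up diverges from the paper is the ``cleaner route'' in your second paragraph. There you assert that the one-variable discriminant is ``generically a nonzero non-square'' and hence that any factor must be independent of $t=\RE z_{ij}$ and therefore divide \emph{all} of the minor determinants. The paper does not attempt either of these claims; instead, for the $(2,0)$ case it observes that the surviving factor $B$ would then equal $\det(I_{r-1}-X''X''^*)$, which involves the first-row variables $z_{12},\dots,z_{1s}$, while the coefficient of $(\RE z_{11})^2$ (another minor determinant) involves none of those variables, so $B$ cannot divide it. For the $(1,1)$ case the paper uses that the product of the two linear-in-$t$ leading coefficients is irreducible, so one is constant, forcing the other factor to contain a second-order term in some \emph{other} variable, which was already excluded. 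Your discriminant shortcut would require an independent verification that you have not supplied, so if you actually write this up you should follow your first paragraph (which is the paper's argument) rather than the second.
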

\begin{proof}
\textbf{In case of $S_{r,s}^{\Rmnum{1}}$}: At first, we will prove that $S_{r,s}^{\Rmnum{1}}(Z)$ is irreducible.
Note that $r\leq s$.
We use induction with respect to $k$ on $S_{k,s-r+k}^{\Rmnum{1}}$.
For $Z\in M_{1,s-r+1}^\CC$, $1-ZZ^*$ is irreducible.
Suppose that $S_{r-1,s-1}^{\Rmnum{1}}(Z)$ is irreducible
and $S_{r,s}^{\Rmnum{1}}(Z) = A\,B$ for some polynomial $A$ and $B$. Denote $Z = \left(
                                     \begin{array}{c}
                                       X \\
                                       Y \\
                                     \end{array}
                                   \right)$ as in the proof of Lemma \ref{coefficient}.
The degree of $\RE y_1$ is 2.\\
\textbf{Step 1 :} Suppose that there is a nonzero monomial of $(\RE y_1)^2$ in $A$. Then $S_{r,s}^{\Rmnum{1}}(Z)$ should be of the form
$$ S_{r,s}^{\Rmnum{1}}(Z) = (\mu (\RE y_1)^2 + \sigma \RE{y_1} + \nu) \,B$$
where $\mu, \sigma,\nu$ and $B$ are polynomials without $\RE y_1$ variable.
Without loss of generality, we may assume that $B$ is not a constant.
Then by Lemma \ref{coefficient}, $\mu B$ is $\det(I_{r-1} - X''X''^*)$ which is irreducible by the induction hypothesis.
 Hence $B$ should be $\det(I_{r-1} - X''X''^*)$ (up to constants). This implies that it consists of the monomials of variable $X''$.
Then every variable of $X'$ should be in $A$. Consider the coefficient of $(\RE{x_{11}})^2$ which is irreducible and does not contain $\RE{x_{1j}}$, $\im{x_{1j}}$ variables. However the coefficient of  $\left(\RE{x_{11}}\right)^2$ should contain $B$ and  this induces a contradiction. Therefore there is no second order term in each $A$ and $B$.\\
\textbf{Step 2 :} Suppose that $S_{r,s}^{\Rmnum{1}}(Z) = (\mu \RE y_1 + \sigma)\,(\nu \RE y_1  + \rho)$ where $\mu, \sigma, \nu, \rho$ has no
$\RE y_1$ variable. Then $\mu \nu$ is irreducible by Lemma \ref{coefficient} and
hence $\mu$ or $\nu$ is a constant.
If $\mu$ is a constant, $\nu = \det(I_{r-1} - X''X''^*)$ up to constant.
Note that $\nu$ contains second order terms of variables in $X''$.
Since there is no $\RE y_1$ term in $\rho$, $\nu \RE y_1 + \rho$ should have second order term.
But by Step 1, $\nu \RE y_1 + \rho$ cannot have second order term.
 Hence $S_{r,s}^{\Rmnum{1}}(Z)$ is irreducible.

\textbf{In case of $S_n^{\Rmnum{2}}$, $S_n^{\Rmnum{3}}$, $S_n^{\Rmnum{4}}$:} Since for $n=1$, $S^{\Rmnum{3}}_1(z) = 1- |z|^2$ for $z\in \Delta$ which is irreducible,
use the same method (induction) as in the proof of the case $S_{r,s}^{\Rmnum{1}}$
considering factorization with respect to $\RE{z_{11}}$ variable.
In case of $S_n^{\Rmnum{4}}$, we can easily show that it is irreducible. We omit the proof.

We only need to prove that $S^{\Rmnum{2}}_n(Z)$ is irreducible. For $n=2$, let $Z = \left(
                                                                       \begin{array}{cc}
                                                                         0 & a \\
                                                                         -a & 0 \\
                                                                       \end{array}
                                                                     \right)$.
Then $S_2^{\Rmnum{2}}(Z) = 1-|a|^2$ which is irreducible.
For $n=3$, let $Z = \left(
                      \begin{array}{ccc}
                        0 & a & b \\
                        -a & 0 & c \\
                        -b & -c & 0 \\
                      \end{array}
                    \right)$.
Then $S^{\Rmnum{2}}_3(Z) = 1-|a|^2 - |b|^2 - |c|^2$ which is also irreducible.
Assume that $S^{\Rmnum{2}}_{2n}(Z)$ and $S^{\Rmnum{2}}_{2n-1}(Z)$ are irreducible.
Since the even dimensional case is similar to the odd dimensional case,
we will only consider the odd dimensional case.
Since the coefficient of $(\RE z_{ij})^4$ in $S_{2n+1}^{\Rmnum{2}}(Z)^2$
is $\det (I_{2n-1} - Z''Z''^*)$ as in Lemma \ref{coefficient} \eqref{II},
the coefficient of $(\RE z_{ij})^2$ in $S^{\Rmnum{2}}_{2n+1}(Z)$ is irreducible.
Hence similar proof of the case $S_{r,s}^{\Rmnum{1}}$ can be applied.
\end{proof}

Let $f : \Omega_1 \rightarrow \Omega_2$ be a proper holomorphic polynomial map where $\Omega_1,\, \Omega_2$ are irreducible bounded symmetric domains of classical type.
Let $S_1,\, S_2$ be the corresponding generic norms.
Since $f$ is proper, by Proposition \ref{irreducible},
$$S_2(f(Z)) = 0 \text{ whenever } S_1(Z)=0.$$
(Note that if $Z\in \partial \Omega_n^{\Rmnum{4}}$, $S^{\Rmnum{4}}_n(Z)=0$ since if $ZZ^*=1$, $S^{\Rmnum{4}}_n(Z)<0$.
Hence we do not need to consider $ZZ^*-1$ term in the definition of $\Omega_n^{\Rmnum{4}}$.)
Hence there is a real analytic map $F_f$ such that
\begin{equation}
S_2( f(Z)) = S_1(Z)\, F_f(Z).
\end{equation}
We can polarize this equation by
\begin{equation} \label{polarize}
S_2( f(Z),f(W)) = S_1(Z,W)\, F_f(Z,W).
\end{equation}
\begin{example}
\begin{enumerate}
\item Let $f: \Omega_{r,s}\rightarrow\Omega_{r',s'}$ be a proper holomorphic polynomial map.
Then \eqref{polarize} is expressed by
$$
\det(I_{r'} - f(Z)f(W)^*) = \det(I_{r} - ZW^* )\, F_f(Z,W).
$$
\item Let $f: \Omega_n^{\Rmnum{4}}\rightarrow\Omega_N^{\Rmnum{4}}$ be a proper holomorphic polynomial map. Then \eqref{polarize} is expressed by
\begin{eqnarray*}
&&1-2f(Z)f(W)^* + \left(f(Z)f(Z)^t\right)(\overline{f(W)f(W)^t})\\
&&\quad\quad\quad\quad\quad\quad= \left(1-2ZW^* + \left(ZZ^t\right)(\overline{WW^t})\right) F_f(Z,W).
\end{eqnarray*}
\end{enumerate}
\end{example}

\begin{lem}\label{composition rule}
Let $g:\Omega_1\rightarrow \Omega_2$ and $f : \Omega_2\rightarrow \Omega_3$ be
proper holomorphic polynomial maps. Then $$F_{f\circ g}(Z,W) = F_g(Z,W)\, F_f(g(Z),g(W)).$$
\end{lem}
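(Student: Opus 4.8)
The plan is to read the identity straight off three instances of the defining relation \eqref{polarize}, one for each of the maps $g$, $f$, and $f\circ g$, and then cancel a common factor. Write $S_1,S_2,S_3$ for the generic norms of $\Omega_1,\Omega_2,\Omega_3$. Applying \eqref{polarize} to $g:\Omega_1\to\Omega_2$, to $f:\Omega_2\to\Omega_3$, and to the composite $f\circ g:\Omega_1\to\Omega_3$ produces three polarized identities (each holomorphic in the first argument and anti-holomorphic in the second):
\begin{align*}
S_2(g(Z),g(W)) &= S_1(Z,W)\,F_g(Z,W),\\
S_3(f(Z'),f(W')) &= S_2(Z',W')\,F_f(Z',W'),\\
S_3\big((f\circ g)(Z),(f\circ g)(W)\big) &= S_1(Z,W)\,F_{f\circ g}(Z,W).
\end{align*}

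The key step is to specialize the second identity at the pair coming from $g$. Since $g$ is holomorphic, substituting $Z'=g(Z)$ into the holomorphic slot and $W'=g(W)$ into the anti-holomorphic slot is a legitimate specialization of the polarized identity to holomorphic data in each argument, and it yields
$$ S_3\big(f(g(Z)),f(g(W))\big) = S_2(g(Z),g(W))\,F_f(g(Z),g(W)).$$
Feeding the first identity into the right-hand side gives
$$ S_3\big((f\circ g)(Z),(f\circ g)(W)\big) = S_1(Z,W)\,F_g(Z,W)\,F_f(g(Z),g(W)).$$
Comparing this with the third identity above produces
$$ S_1(Z,W)\,F_{f\circ g}(Z,W) = S_1(Z,W)\,F_g(Z,W)\,F_f(g(Z),g(W)),$$
after which it remains only to cancel $S_1(Z,W)$ to obtain the claim.

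The cancellation is justified because all quantities here are polynomials in the holomorphic entries of $Z$ and the anti-holomorphic entries of $W$, a ring which is an integral domain, and $S_1$ is a nonzero element of it (indeed $S_1$ does not vanish at the origin, and by Proposition \ref{irreducible} it is even irreducible). I expect the only genuinely non-formal point — and hence the place to be careful in writing this up — to be the justification that the polarized identity for $f$ may be evaluated at $(g(Z),g(W))$: this rests on the fact that polarization is the complexification of a real-analytic identity, so it persists under the holomorphic substitution $g$ in the first slot together with the corresponding anti-holomorphic substitution in the second. Once that substitution is granted, everything else is a one-line cancellation in a polynomial ring, so there is no real analytic obstacle.
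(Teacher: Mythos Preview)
Your proof is correct and follows exactly the same route as the paper: apply the defining relation \eqref{polarize} to $f\circ g$, to $f$ (specialized at $(g(Z),g(W))$), and to $g$, then cancel the common factor $S_1(Z,W)$. The paper's version is terser and omits the explicit justification of the cancellation, but the argument is identical.
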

\begin{proof}
\begin{eqnarray*}
S_1(Z, W)F_{f\circ g}(Z,W) &=& S_3( f\circ g(Z) , f\circ g(W))\\
 &=& S_2(g(Z), g(W) ) \,F_f(g(Z),g(W)) \\
 &=& S_1(Z, W)F_g(Z,W) \,F_f(g(Z),g(W))
\end{eqnarray*}
\end{proof}

\begin{lem}\label{F_U}
Let $U = \left(
           \begin{array}{cc}
             A & B \\
             C & D \\
           \end{array}
         \right)$ be an automorphism of a domain $\Omega$ with appropriate block matrices $A,B,C,D$.
         Then $F_U$ is given as the following:
\begin{enumerate}
\item If $\Omega = \Omega_{r,s}^{\Rmnum{1}}$ or $\Omega_n^{\Rmnum{3}}$,
 $$F_U(Z,W) = \frac{1}{\det(A+ZC)\,\overline{\det(A+WC)}}.$$
\item If $\Omega = \Omega_n^{\Rmnum{2}}$,
        $$F_U(Z,W) = \frac{1}{f_{U}(z,w)}\,
        \text{ where } \, f_U(Z,W)^2 = \det(A+ZC)\overline{\det(A+WC)}.$$
\item If $\Omega = \Omega_n^{\Rmnum{4}}$,
         $$F_U(Z,W) = -\frac{1}{2}\frac{1}{\{(-2iZB + Z'D)(i, 1)^t\}\{\overline{(-2iWB + W'D)(i, 1)^t}\}}$$
\end{enumerate}
\end{lem}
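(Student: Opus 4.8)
The plan is to treat $F_U$ as the \emph{automorphy factor} of the generic norm: in each of the four cases I will establish a transformation law of the shape
\[
S(g_U(Z),g_U(W)) = \frac{S(Z,W)}{j(U,Z)\,\overline{j(U,W)}},
\]
and then simply read off $F_U(Z,W) = 1/(j(U,Z)\,\overline{j(U,W)})$ by comparing with the defining polarized identity \eqref{polarize} for $f=U$ (where $S_1=S_2=S$). The unifying principle is that for each domain the generic norm is, up to the appropriate power or a quadratic Hermitian expression, invariant under the automorphism action modulo an explicit cocycle coming from the linear factor by which the Borel embedding must be renormalized.

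For $\Omega_{r,s}^{\Rmnum{1}}$ and $\Omega_n^{\Rmnum{3}}$ the generic norm is $\det(I-ZZ^*)$ and $U$ acts by the same Möbius formula $g_U(Z)=(A+ZC)^{-1}(B+ZD)$ from \eqref{auto123}. The key step is the matrix identity
\[
I_r - g_U(Z)g_U(W)^* = (A+ZC)^{-1}\,(I_r - ZW^*)\,\bigl((A+WC)^*\bigr)^{-1},
\]
which I would verify by multiplying out $(A+ZC)\bigl(I_r-g_U(Z)g_U(W)^*\bigr)(A+WC)^*$, using $(A+ZC)g_U(Z)=B+ZD$, and collecting terms with the $U(r,s)$ relations $AA^*-BB^*=I_r$, $AC^*=BD^*$, $CC^*-DD^*=-I_s$ recorded in \eqref{SU(r,s)}. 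Taking determinants yields the stated $F_U$, and the type $\Rmnum{3}$ case is the very same computation restricted to symmetric $Z,W$.

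For $\Omega_n^{\Rmnum{2}}$ the action is again by the Möbius formula, so the type $\Rmnum{1}$ identity gives $\det(I_n - g_U(Z)g_U(W)^*) = \det(I_n-ZW^*)/\bigl(\det(A+ZC)\overline{\det(A+WC)}\bigr)$. Since $S_n^{\Rmnum{2}}(Z)^2=\det(I_n-ZZ^*)$, polarization yields $S_n^{\Rmnum{2}}(Z,W)^2=\det(I_n-ZW^*)$, and extracting a square root of the displayed identity produces $F_U=1/f_U$ with $f_U(Z,W)^2=\det(A+ZC)\overline{\det(A+WC)}$. The delicate point here is to show the square root can be chosen single valued and holomorphic in $Z$: one must use the extra symmetry defining $\Aut(\Omega_n^{\Rmnum{2}})$ to see that $\det(A+ZC)$ is itself the square of a holomorphic (Pfaffian type) polynomial, and then fix the sign by normalizing at $Z=W=0$ so that $F_{\mathrm{id}}\equiv 1$.

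For $\Omega_n^{\Rmnum{4}}$ I would abandon brute force and pass to the compact dual. Writing $\xi^{\Rmnum{4}}(Z)=(-2iZ,\,1+ZZ^t,\,i(1-ZZ^t))\in\CC^{n+2}$ and letting $\langle\cdot,\cdot\rangle$ be the Hermitian form of signature $(n,2)$ preserved by $U(n,2)$, a short computation gives $S_n^{\Rmnum{4}}(Z)=-\tfrac12\langle\xi^{\Rmnum{4}}(Z),\xi^{\Rmnum{4}}(Z)\rangle$, which polarizes to $S_n^{\Rmnum{4}}(Z,W)=-\tfrac12\langle\xi^{\Rmnum{4}}(Z),\xi^{\Rmnum{4}}(W)\rangle$. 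Because $U$ preserves $\langle\cdot,\cdot\rangle$ and $\xi^{\Rmnum{4}}(g_U(Z))=\lambda_Z^{-1}\,\xi^{\Rmnum{4}}(Z)U$, where $\lambda_Z$ is the scalar obtained by normalizing the last two coordinates, the Hermitian form gets scaled by exactly $1/(\lambda_Z\overline{\lambda_W})$, whence $F_U(Z,W)=1/(\lambda_Z\overline{\lambda_W})$. Substituting the value of $\lambda_Z$ read off from $Z'=(1+ZZ^t,\,i-iZZ^t)$ and the last two entries of $\xi^{\Rmnum{4}}(Z)U=(-2iZA+Z'C,\,-2iZB+Z'D)$ then gives the stated expression for $F_U$.

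I expect the genuine obstacles to be, first, the single valuedness of the square root in the type $\Rmnum{2}$ case (this is conceptual, not just computational, and is where the special structure of $\Aut(\Omega_n^{\Rmnum{2}})$ must enter), and, second, the careful bookkeeping of the normalizing scalar $\lambda_Z$ in the type $\Rmnum{4}$ case, where keeping track of the overall numerical constant is the main source of error; the types $\Rmnum{1}$ and $\Rmnum{3}$ serve as the clean model on which the whole scheme rests.
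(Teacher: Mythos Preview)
Your proposal is correct and follows essentially the same route as the paper. For types~$\Rmnum{1}$ and~$\Rmnum{3}$ the paper establishes exactly the matrix identity $I_r-g_U(Z)g_U(W)^*=(A+ZC)^{-1}(I_r-ZW^*)\bigl((A+WC)^*\bigr)^{-1}$ from the $U(r,s)$ relations and takes determinants; for type~$\Rmnum{2}$ it then passes to the square root (your Pfaffian observation for the single-valuedness is precisely the content of the remark the paper places immediately after the lemma); and for type~$\Rmnum{4}$ the paper too writes $S_n^{\Rmnum{4}}(Z)=-\tfrac12 Q(\tilde\xi^{\Rmnum{4}}(Z))$ with $Q$ the signature-$(n,2)$ Hermitian form and reads off the scaling factor of the Borel embedding, exactly as you propose.
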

\begin{proof}
(1 and 2) Since by \eqref{SU(r,s)},
\begin{eqnarray*}
&&U(Z)U(W)^* = (A+ZC)^{-1}(B+ZD) \{(A+WC)^{-1}(B+WD) \}^*\\
 &=& (A+ZC)^{-1} \left( (A+ZC)(A+WC)^* + ZZ^* - I_{r}\right) \left((A+WC)^{-1} \right)^* \\
 &=& I_{r} - (A+ZC)^{-1}(I_{r}- ZZ^*) \left((A+WC)^{-1} \right)^*,
\end{eqnarray*}
\begin{equation}\label{det U}
\det(I_{r} - U(Z)U(W)^*) = \frac{\det(I_{r}- ZW^*)}{\det(A+ZC)\,\overline{\det(A+WC)}}.
\end{equation}

(3)Note that for $Z=(z_1,\dots, z_n)$, $S_n^{\Rmnum{4}}(Z) = -\frac{1}{2}Q( \tilde\xi^{\Rmnum{4}}(Z))$ where
$Q(x_1,\dots, x_{n+2}) = |x_1|^2 + \dots |x_n|^2 -|x_{n+1}|^2 -|x_{n+2}|^2 $
and $\tilde\xi^{\Rmnum{4}}(Z) = (-2iZ, 1+ ZZ^t, i(1-ZZ^t))$.
Then
\begin{eqnarray*}
S_n^{\Rmnum{4}}(g_M(Z)) &=& -\frac{1}{2}Q\left( \tilde\xi^{\Rmnum{4}}({\xi^{\Rmnum{4}}}^{-1}(\xi^{\Rmnum{4}}(Z)M)\right)\\
&=& -\frac{1}{2}Q\left( \frac{-2iZA +Z'C}{(-2iZB + Z'D)(i, 1)^t},\frac{-2iZB + Z'D}{(-2iZB + Z'D)(i, 1)^t}\right)\\
&=& -\frac{1}{2}\frac{S_n^{\Rmnum{4}}(Z)}{|(-2iZB + Z'D)(i, 1)^t|^2}
\end{eqnarray*}
\end{proof}

\begin{remark}
In case of $\Omega_n^{\Rmnum{2}}$, since for $Z\in ASM^\CC_{n,n}$, $C^tA + C^tZC$ is anti-symmetric.
This implies that
$$\det(A + ZC) = \frac{\det(C^tA + C^tZC)}{\det(C)} = \frac{(Pf(C^tA + C^tZC))^2}{\det(C)}$$
where $Pf(Y)$ is the Pfaffian polynomial of a matrix $Y$ and hence $F_U$ is a rational function.

\end{remark}
\begin{proof}[Proof of Theorem~\ref{main theorem}]

Suppose that $g \circ U = V\circ f$ for some $V = \left(
                                 \begin{array}{cc}
                                   V_1 & V_2 \\
                                   V_3 & V_4 \\
                                 \end{array}
                               \right)\in \Aut(\Omega_2)$ and
$U= \left(
                                 \begin{array}{cc}
                                   U_1 & U_2 \\
                                   U_3 & U_4 \\
                                 \end{array}
                               \right)\in \Aut(\Omega_1)$.
Then since $F_{g\circ U} = F_{V\circ f}$, by Lemma \ref{composition rule},
$$ F_g(U(Z),U(W)) F_U(Z,W) = F_V(f(Z),f(W)) F_f(Z,W).$$
By multiplying $S^1(Z,W)$ to both side, we obtain
\begin{eqnarray}\label{gU=Vf}
S^2(g\circ U(Z), g\circ U(W)) = F_V(f(Z),f(W)) S^1(f(Z),f(W)).
\end{eqnarray}
For simplicity, we only consider that the case $\Omega_1$ and $\Omega_2$ are
bounded symmetric domains of type $\Rmnum{1}$. \eqref{gU=Vf} is
\begin{equation} \label{formula}
\begin{split}
\det(I_{r'}-f(Z)f(W)^*) = \det&(I_{r'} - g\circ U(Z) (g\circ U(W))^*)\\
  &\det(V_1 + f(Z)V_3) \,\overline{\det(V_1 + f(W)V_3)}.
\end{split}
\end{equation}

Put $W=0$ in \eqref{formula}.
Then
\begin{equation}\label{formula_2}
1 =  \det(I_{r'} - g\circ U(Z) (g\circ U(0))^*)\,
  \det(V_1 + f(Z)V_3)\, \overline{\det(V_1)}.
\end{equation}
If $U_3=0$, then by $g(U_1^{-1}ZU_4) = V\circ f(Z)$,
we obtain $0=g(0)=V\circ f(0)=V_1^{-1}V_2$ and hence $V_2=0$.
So assume $U_3\neq 0$.
Note that in this case, $\det(U_1 + ZU_3)$ is not a constant.
Suppose that $g\circ U(0) \neq 0$. Then $ \det(I_{r'} - g\circ U(Z) (g\circ U(0))^*)$ is not a constant and hence it
should be of the form $p/q$ where $p$ and $q$ are non-constant polynomial without common factors
and $q = \det(U_1+ZU_3)^l$ for some $l$.
But since product of $p/q$ and polynomial cannot be a constant, \eqref{formula_2} induces a contradiction.
Hence $g\circ U(0)$ should be zero. This implies that
$0 = g(U_1^{-1} U_2) = V\circ f(0) = V(0) = V_1^{-1} V_2 $. Hence $V_2=0$ (and also $V_3=0$)
and hence $ 1= \det(V_1 + f(Z)V_3)$. Put this in \eqref{formula}.
\begin{equation}\label{equation}
\det(I_{r'}-f(Z)f(W)^*) = \det(I_{r'} - g\circ U(Z) (g\circ U(W))^*).
\end{equation}
Since right side of \eqref{equation} is singular on $\{Z\in \Omega_{r,s} : \det(U_1 + ZU_3) =0 \}$,
$U_3$ should be zero.
\end{proof}

\section{Application}
In this section, we suggest examples which are 1-parameter family of inequivalent
proper holomorphic maps between bounded symmetric domains of classical type.
We use Theorem \ref{main theorem} to prove that proper holomorphic maps $f_t:\Omega_1\rightarrow \Omega_2$
are inequivalent for each $0\leq t\leq 1$.
Every example in this section is obtained in \cite{Seo}.
As in \cite{D'Angelo_Lebl2}, we define the following equivalence relation:\\
Let $\Omega_1,\Omega_2$ be domains.
Consider a continuous map $H : \Omega_1 \times [0,1] \rightarrow \Omega_2$.
Denote $H_t(z) = H(z,t)$.
Suppose that $H_t : \Omega_1 \rightarrow \Omega_2$ is holomorphic for each $t\in [0,1]$.
Then we will say $H_t$ is a \textit{continuous family} of holomorphic maps from $\Omega_1$ to $\Omega_2$.
\begin{definition}
Let $f : \Omega_1 \rightarrow \Omega_2$ and $g:\Omega_1\rightarrow \Omega_3$ be proper holomorphic maps.
Then $f$ and $g$ are \textit{homotopic in the target domain $\Omega$} if for each $t\in [0,1]$,
there is a proper holomorphic maps $H_t : \Omega_1\rightarrow \Omega$ such that
\begin{itemize}
\item there are totally geodesic embeddings $ e_k : \Omega_k \rightarrow \Omega$ for $k=2,3$, with respect to their Bergman metrics,
\item $H_0=e_2\circ f$ and $H_1 = e_3\circ g$,
\item $H_t$ is a continuous family of holomorphic maps from $\Omega_1$ to $\Omega$.
\end{itemize}
\end{definition}

\subsection{1-parameter family of proper holomorphic maps among $\Omega_{r,s}^\Rmnum{1}$.}
\mbox{}

Consider proper holomorphic maps $f,g :\Omega_{2,2}^\Rmnum{1}\rightarrow \Omega_{3,3}^\Rmnum{1}$ which are defined by
$$f\left(\left(
         \begin{array}{cc}
           z_1 & z_2 \\
           z_3 & z_4 \\
         \end{array}
       \right)
    \right) = \left(
           \begin{array}{ccc}
             z_1^2 & z_1z_2& z_2  \\
             z_1z_3 & z_2z_3& z_4  \\
             z_3 & z_4& 0  \\
           \end{array}
         \right), \text{ for }
         \left(
         \begin{array}{cc}
           z_1 & z_2 \\
           z_3 & z_4 \\
         \end{array}
       \right) \in \Omega_{2,2}^\Rmnum{1}$$
$$g\left(\left(
         \begin{array}{cc}
           z_1 & z_2 \\
           z_3 & z_4 \\
         \end{array}
       \right)
    \right) = \left(
           \begin{array}{ccc}
             z_1^2 & \sqrt{2}z_1z_2 & z_2^2 \\
             \sqrt{2} z_1z_3 & z_1z_4+z_2z_3 & \sqrt{2}z_2z_4 \\
             z_3^2 & \sqrt{2}z_3z_4 & z_4^2 \\
           \end{array}
         \right)$$
and $f_t : \Omega_{2,2}^{\Rmnum{1}}\rightarrow \Omega_{4,4}^{\Rmnum{1}}$ be proper holomorphic maps
for $ 0\leq t\leq 1$ defined by
\begin{equation}\label{inequivalent example}
  \left(
         \begin{array}{cc}
           z_1 & z_2 \\
           z_3 & z_4 \\
         \end{array}
       \right) \mapsto \left(
         \begin{array}{cccc}
           z_1^2 & \sqrt{2-t}z_1z_2 & \sqrt{1-t} z_2^2& \sqrt{t}z_2 \\
           \sqrt{2-t}z_1z_3 & \frac{2(1-t)}{2-t}z_1z_4 + z_2z_3& 2\sqrt{\frac{1-t}{2-t}}z_2z_4& \sqrt{\frac{t}{2-t}}z_4 \\
           \sqrt{1-t}z_3^2 & 2\sqrt{\frac{1-t}{2-t}}z_3z_4& z_4^2& 0 \\
           \sqrt{t}z_3 & \sqrt{\frac{t}{2-t}}z_4&0&0\\
         \end{array}
       \right).
\end{equation}
Then it is easily observed that $f$ and $g$ are homotopic in the target domain $\Omega_{4,4}^{\Rmnum{1}}$
through $f_t$.

\begin{cor} \label{non-equiv theorem2}
 $f_t$ are inequivalent for different $t$, $0\leq t \leq 1$.
\end{cor}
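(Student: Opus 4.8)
The plan is to use Theorem \ref{main theorem} to turn a question about equivalence into a question about isotropic equivalence, and then to read off from the \emph{linear part} of $f_t$ a single number that varies strictly monotonically with $t$. Each $f_t$ is a proper holomorphic polynomial map with $f_t(0)=0$, so Theorem \ref{main theorem} says that $f_t$ and $f_{t'}$ are equivalent if and only if they are isotropically equivalent at $0$. It therefore suffices to show that for $t\neq t'$ there are no $U\in\Isot_0(\Omega_{2,2}^{\Rmnum{1}})$ and $V\in\Isot_0(\Omega_{4,4}^{\Rmnum{1}})$ with $f_t=V\circ f_{t'}\circ U$.

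Assume such $U,V$ exist. By the description of the isotropy groups in Section~2, $U$ acts on $M_{2,2}^\CC$ by $Z\mapsto U_1^{-1}ZU_4$ with $U_1,U_4\in U(2)$, and $V$ acts on $M_{4,4}^\CC$ by $W\mapsto V_1^{-1}WV_4$ with $V_1,V_4\in U(4)$. Both actions are \emph{linear}, and both are unitary for the Frobenius inner product $\langle A,B\rangle=\Tr(AB^*)$, since $\Tr\big((P^{-1}AQ)(P^{-1}BQ)^*\big)=\Tr(AB^*)$ for unitary $P,Q$. First I would split $f_{t'}=L_{t'}+Q_{t'}$ into its homogeneous degree-one and degree-two parts (there is no constant term, as $f_{t'}(0)=0$). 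Because $U,V$ are linear, $V\circ f_{t'}\circ U=(V\circ L_{t'}\circ U)+(V\circ Q_{t'}\circ U)$ is again split into homogeneous parts, so comparing degree-one terms in $f_t=V\circ f_{t'}\circ U$ yields the identity of linear maps $L_t=V\circ L_{t'}\circ U$. Since $U,V$ are unitary, $L_t^*L_t=U^*L_{t'}^*L_{t'}U$, so $L_t$ and $L_{t'}$ have the same singular values; in particular the Hilbert--Schmidt norm $\|L_t\|^2=\Tr(L_t^*L_t)$ is an invariant of the isotropic equivalence class.

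It then remains to compute this invariant from \eqref{inequivalent example}. The degree-one entries give
$$L_t(Z)=\sqrt{t}\,z_2\,E_{14}+\sqrt{\tfrac{t}{2-t}}\,z_4\,E_{24}+\sqrt{t}\,z_3\,E_{41}+\sqrt{\tfrac{t}{2-t}}\,z_4\,E_{42},$$
where $Z=(z_{ij})$ and $E_{ij}$ are the matrix units of $M_{4,4}^\CC$. Thus $L_t$ sends the coordinate basis of $M_{2,2}^\CC$ (orthonormal for $\langle\cdot,\cdot\rangle$) to $0,\ \sqrt{t}\,E_{14},\ \sqrt{t}\,E_{41},\ \sqrt{t/(2-t)}(E_{24}+E_{42})$. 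These images occupy disjoint entries, hence are mutually orthogonal, so the basis vectors are singular vectors and the singular values of $L_t$ are $0,\ \sqrt{t},\ \sqrt{t},\ \sqrt{2t/(2-t)}$. Consequently $\|L_t\|^2=2t+\tfrac{2t}{2-t}$, whose derivative $2+4/(2-t)^2$ is positive, so $\|L_t\|^2$ is strictly increasing on $[0,1]$. Therefore $\|L_t\|=\|L_{t'}\|$ forces $t=t'$, and $f_t,f_{t'}$ are inequivalent whenever $t\neq t'$.

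The step I would be most careful about is the reduction in the second paragraph: one must verify that isotropic equivalence is implemented by genuinely \emph{linear} unitary maps, so that it respects the decomposition into homogeneous parts and descends to an honest unitary equivalence of the linear parts $L_t$ and $L_{t'}$. Once that is established the remaining computation is immediate, precisely because the basis images of $L_t$ land in disjoint matrix entries, making the singular values transparent.
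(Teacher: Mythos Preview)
Your argument is correct and shares the paper's overall strategy: invoke Theorem~\ref{main theorem} to reduce to isotropic equivalence, observe that the isotropy actions $Z\mapsto U_1^{-1}ZU_4$ and $W\mapsto V_1^{-1}WV_4$ are linear, and then compare the degree-one parts of $f_t$ and $f_{t'}$. Where you diverge is in how you extract the numerical obstruction. The paper proceeds by evaluating $f_{t,1}(UZV)=Lf_{s,1}(Z)M$ at the test matrices $Z_1=E_{11}$ and $Z_2=E_{12}$, first forcing $U$ and $V$ to be block-diagonal, and then reading off a single scalar identity $\sqrt{t}\,U_1V_4=\sqrt{s}\,L_{11}M_{44}$ whose absolute value gives $t=s$. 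You instead note that $U$ and $V$ are Frobenius-unitary, so the singular values (hence the Hilbert--Schmidt norm) of the linear part are isotropy invariants, and compute $\|L_t\|^2=2t+\tfrac{2t}{2-t}$ to be strictly increasing. Your route avoids the case analysis on the entries of $U,V,L,M$ and handles $t=0$ uniformly (the paper assumes $t\neq 0$ without loss of generality), at the cost of importing the small linear-algebra fact that unitary pre- and post-composition preserves singular values; the paper's route is more hands-on but makes explicit exactly which matrix entries carry the obstruction.
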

\begin{proof}
Suppose that $f_t\circ A = B\circ f_s$ for some $A \in U(2,2)$ and $B \in U(4,4)$.
Without loss of generality, we may assume that $t\neq 0$.
Then by Theorem \ref{main theorem}, $f_t(UZV) = Lf_s(Z)M$ for some $U=\left(
                                                                                        \begin{array}{cc}
                                                                                          U_1 & U_2 \\
                                                                                          U_3 & U_4 \\
                                                                                        \end{array}
                                                                                      \right)\in U(2)$,
$V = \left(
       \begin{array}{cc}
         V_1 & V_2 \\
         V_3 & V_4 \\
       \end{array}
     \right)
\in U(2)$
and $L=(L_{ij}),\,M=(M_{ij})\in U(4)$.
Denote $f_t = \sum f_{t,j}$ where $f_{t,j}$ is a homogeneous polynomial of degree $j$.
Then
$f_{t,j}(UZV) = Lf_{s,j}(Z)M$ for each $j$. Consider linear part
$$f_{t,1} =  \left(
         \begin{array}{cccc}
           0 & 0 & 0& \sqrt{t}z_2 \\
           0 & 0& 0& \sqrt{\frac{t}{2-t}}z_4 \\
          0& 0& 0& 0 \\
           \sqrt{t}z_3 & \sqrt{\frac{t}{2-t}}z_4&0&0\\
         \end{array}
       \right).$$
At first, at $ Z_1 = \left(
           \begin{array}{cc}
             1 & 0 \\
             0 & 0 \\
           \end{array}
         \right)$,
 $$f_{t,1}(UZ_1V) = f_{t,1}\left(\begin{array}{cc}
                              U_1V_1 & U_1V_2 \\
                              U_3V_1 & U_3V_2
                            \end{array}\right) =  \left(
         \begin{array}{cccc}
           0 & 0 & 0& \sqrt{t}U_1V_2 \\
           0 & 0& 0& \sqrt{\frac{t}{2-t}} U_3V_2 \\
          0& 0& 0& 0 \\
            \sqrt{t}U_3V_1 & \sqrt{\frac{t}{2-t}}U_3V_2&0&0\\
         \end{array}
       \right)$$
and $Lf_{s,1}(Z_1)M =0$. This implies that $U_1V_2=U_3V_2=U_3V_1=U_3V_2=0$. Suppose that $U_3 \neq 0$.
Then $V_1 = V_2 = 0$. This is a contradiction to $V\in U(2)$. Hence $U_3=0$. This implies that $U_2=0$ and $U_1\neq 0$, hence $V_2=V_3=0$.

Second, at $Z_2 = \left(
                          \begin{array}{cc}
                            0 & 1 \\
                            0 & 0 \\
                          \end{array}
                        \right)$,
$$f_{t,1}(UZ_2V) = f_{t,1}\left(\begin{array}{cc}
                              0 & U_1V_4 \\
                              0 & 0
                            \end{array}\right) =  \left(
         \begin{array}{cccc}
           0 & 0 & 0& \sqrt{t}U_1V_4 \\
           0 & 0& 0& 0 \\
           0 & 0& 0 & 0 \\
           0 & 0&0&0\\
         \end{array}
       \right)$$ and
$$Lf_{s,1}(Z_2)M  = \sqrt{s}\left(
                                           \begin{array}{c}
                                             L_{11}(M_{41},\, M_{4 2},\, M_{43},\, M_{44}) \\
                                             L_{21}(M_{41},\, M_{4 2},\, M_{43},\, M_{44}) \\
                                             L_{31}(M_{41},\, M_{4 2},\, M_{43},\, M_{44}) \\
                                             L_{41}(M_{41},\, M_{4 2},\, M_{43},\, M_{44}) \\
                                           \end{array}
                                         \right).
$$
This implies that $L_{21}=L_{31}=L_{41}=0$, $L_{11} \neq 0$, $M_{41}=M_{42}=M_{43}=0$ and $\sqrt{t}U_1V_4 = \sqrt{s} L_{11}M_{44}$.
Since $L$ is unitary, $L_{12}=L_{13}=L_{14}=0$. Hence by taking square norm on $\sqrt{t}U_1V_4 = \sqrt{s} L_{11}M_{44}$, we obtain $t=s$.
\end{proof}

\begin{remark}
In \cite{Seo}, there are generalizations of $f_0$, $f_1$ to $\tilde{f}_0$, $\tilde{f}_1$
as proper holomorphic maps
$\tilde{f}_0 : \Omega_{r,s}^{\Rmnum{1}}\rightarrow \Omega_{2r-1,2s-1}^{\Rmnum{1}}$ and
$\tilde{f}_1 : \Omega_{r,s}^{\Rmnum{1}} \rightarrow \Omega_{\frac{1}{2}r(r+1), \frac{1}{2}s(s+1)}^{\Rmnum{1}}$.
Similar to \eqref{inequivalent example}, we can make homotopy $\tilde{f}_t$ of $\tilde{f}_0$ and $\tilde{f}_1$ which are inequivalent for all $t$, $0\leq t\leq 1$.
That is, $\tilde f_0$ and $\tilde f_1$ are homotopic in the target domain $\Omega^\Rmnum{1}_{r',s'}$ with $r'=\frac{1}{2}(r^2+r+2s-2)$ and $s'=\frac{1}{2}(s^2+s+2r-2)$.
\end{remark}

There are only two inequivalent proper holomorphic maps from $\BB^n$ to $\BB^{2n-1}$ which are the standard embedding and the Whitney map (see \cite{Huang_Ji}).
As the case of the unit balls, it is expected that there are only two inequivalent proper holomorphic maps from $\Omega_{r,s}^\Rmnum{1}$ to $\Omega_{2r-1, 2s-1}^\Rmnum{1}$ that are the standard embedding and the generalized Whitney map \eqref{generalized Whitney map}. In contrast to $\Omega_{2r-1, 2s-1}^\Rmnum{1}$,
if the target domain changes to $\Omega_{2r-1,2s}$, there are infinitely many proper holomorphic maps;
 Let $g_t :\Omega_{2,2}^{\Rmnum{1}} \rightarrow \Omega_{3,4}^{\Rmnum{1}}$, $0\leq t\leq 1$ be proper holomorphic maps defined by
\begin{equation}\label{inequivalent example2}
 \left(
         \begin{array}{cc}
           z_1 & z_2 \\
           z_3 & z_4 \\
         \end{array}
       \right) \mapsto
\left(
         \begin{array}{cccc}
           \sqrt{t}z_1^2 & \sqrt{t}z_1z_2 & \sqrt{1-t} z_1& z_2 \\
           \sqrt{t}z_1z_3 & \sqrt{t}z_2z_3& \sqrt{1-t}z_3& z_4 \\
           z_3& z_4& 0 & 0 \\
         \end{array}
       \right).
\end{equation}
$g_t$ are generalized to proper holomorphic map
$G_t :\Omega_{r,s}^{\Rmnum{1}}\rightarrow \Omega_{2r-1,2s}^{\Rmnum{1}}$, $0\leq t\leq 1$, given by for $Z = (z_{ij})_{1\leq i\leq r,\, 1\leq j\leq s}$.
\begin{equation} \label{generalized whitney map}
 Z
\mapsto\left(
  \begin{array}{cccccccc}
    \sqrt{t}z_{11}^2  & \sqrt{t}z_{11}z_{12}&\dots& \sqrt{t}z_{11}z_{1s}&\sqrt{1-t}z_{11}& z_{12}& \dots & z_{1s}   \\
    \sqrt{t}z_{11} z_{21} & \sqrt{t}z_{21}z_{12}& \dots& \sqrt{t}z_{21}z_{1s} &\sqrt{1-t}z_{21}&z_{22}& \dots& z_{2s}  \\
    \vdots & \vdots &\ddots & \vdots&\vdots& \ddots &\vdots \\
    \sqrt{t}z_{11}z_{r1} & \sqrt{t}z_{r1}z_{12} &\dots & \sqrt{t}z_{r1}z_{1s}&\sqrt{1-t}z_{r1}& z_{r2}& \dots& z_{rs} \\
    z_{21} & z_{22} &\dots & z_{2s}& 0&0& \dots & 0 \\
     \vdots& \vdots & \ddots& \vdots &\vdots& \vdots& \ddots & \vdots\\
     z_{r1}& z_{r2}&\dots &z_{rs}&0&0& \dots& 0\\
  \end{array}
\right)
\end{equation}

\begin{proof}[Proof of Theorem~\ref{non-equiv theorem}]
The same method of the proof in Corollary \ref{non-equiv theorem2} can be applied. We omit the proof.
\end{proof}

\subsection{1-parameter family of proper holomorphic maps among $\Omega_n^{\Rmnum{3}}$.}
\mbox{}

Consider the proper holomorphic maps $h_0 : \Omega_{2}^{\Rmnum{3}}\rightarrow \Omega_3^{\Rmnum{3}}$
and $h_1 : \Omega_{2}^{\Rmnum{3}}\rightarrow \Omega_3^{\Rmnum{3}}$ defined by
$$
h_0\left(\left(
         \begin{array}{cc}
           z_1 & z_2 \\
           z_2 & z_3 \\
         \end{array}
       \right)
    \right) = \left(
           \begin{array}{ccc}
             z_1^2 & \sqrt{2}z_1z_2 & z_2^2 \\
             \sqrt{2} z_1z_2 & z_1z_2+z_2^2 & \sqrt{2}z_2z_3 \\
             z_2^2 & \sqrt{2}z_2z_3 & z_3^2 \\
           \end{array}
         \right)$$
$$ h_1\left(\left(
         \begin{array}{cc}
           z_1 & z_2 \\
           z_2 & z_3 \\
         \end{array}
       \right)
    \right) = \left(
           \begin{array}{ccc}
             z_1^2 & z_1z_2& z_2  \\
             z_1z_2 & z_2^2& z_3  \\
             z_2 & z_3& 0  \\
           \end{array}
         \right),\,\,\,
\text{for} \left(
         \begin{array}{cc}
           z_1 & z_2 \\
           z_2 & z_3 \\
         \end{array}
       \right) \in \Omega_{2,2}$$
which are the restriction of $f_0$ and $f_1$ to $\Omega_2^{\Rmnum{3}}$.
These are homotopically equivalent in the target domain $\Omega_4^{\Rmnum{3}}$ by $h_t : \Omega^{\Rmnum{3}}_2\rightarrow \Omega^{\Rmnum{3}}_4$ which are proper holomorphic maps
given by $h_t(Z) = f_t|_{\Omega^{\Rmnum{3}}_2}(Z)$. Explicitly,
\begin{equation}
  h_t : \left(
         \begin{array}{cc}
           z_1 & z_2 \\
           z_2 & z_3 \\
         \end{array}
       \right) \mapsto \left(
         \begin{array}{cccc}
           z_1^2 & \sqrt{2-t}z_1z_2 & \sqrt{1-t} z_2^2& \sqrt{t}z_2 \\
           \sqrt{2-t}z_1z_2 & \frac{2(1-t)}{2-t}z_1z_3 + z_2^2& 2\sqrt{\frac{1-t}{2-t}}z_2z_3& \sqrt{\frac{t}{2-t}}z_3 \\
           \sqrt{1-t}z_2^2 & 2\sqrt{\frac{1-t}{2-t}}z_2z_3& z_3^2& 0 \\
           \sqrt{t}z_2 & \sqrt{\frac{t}{2-t}}z_3&0&0\\
         \end{array}
       \right).
\end{equation}
We can apply similar method to $h_t$ and a higher version of $h_t$, we obtain the following:
\begin{cor}
There are uncountably many inequivalent proper holomorphic maps from $\Omega_n^{\Rmnum{3}}$ to $\Omega_{n'}^{\Rmnum{3}}$ with $n' = \frac{1}{2}(n^2+3n-2)$.
\end{cor}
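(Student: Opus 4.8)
The plan is to mimic the proof of Corollary~\ref{non-equiv theorem2}, with the type~$\Rmnum{1}$ homotopy \eqref{inequivalent example} replaced by the higher-dimensional symmetric analogue of $h_t$ announced in the preceding Remark.

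First I would write down the map $H_t:\Omega_n^{\Rmnum{3}}\rightarrow\Omega_{n'}^{\Rmnum{3}}$, $n'=\frac12(n^2+3n-2)$, as the restriction to symmetric matrices of the type~$\Rmnum{1}$ homotopy $\tilde f_t:\Omega_{n,n}^{\Rmnum{1}}\rightarrow\Omega_{n',n'}^{\Rmnum{1}}$ of the Remark, that is, $H_t(Z)=\tilde f_t(Z)$ for $Z\in SM_{n,n}^\CC$. As in the case $n=2$, where $h_t=f_t|_{\Omega_2^{\Rmnum{3}}}$, the maps $\tilde f_t$ carry symmetric matrices to symmetric matrices, so each $H_t$ is a proper holomorphic polynomial map fixing $0$ (properness of the restriction follows because $\Omega_n^{\Rmnum{3}}$ and $\Omega_{n'}^{\Rmnum{3}}$ are totally geodesic subdomains and $\tilde f_t$ is proper), and $H_t$ interpolates between the symmetric square map $h_0$, whose image lies in $\Omega_{n(n+1)/2}^{\Rmnum{3}}$, and the generalized Whitney map $h_1$, whose image lies in $\Omega_{2n-1}^{\Rmnum{3}}$. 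I would then record its linear part $H_{t,1}$, the generalization of $h_{t,1}$, whose only nonzero entries carry the factors $\sqrt t$ and $\sqrt{t/(2-t)}$; these factors are what the argument exploits.

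Next, assume $H_t$ and $H_s$ are equivalent with $t\neq s$. By Theorem~\ref{main theorem} they are isotropically equivalent at $0$, and reading the isotropy group of $\Omega_n^{\Rmnum{3}}$ off the table in Section~2, its elements act by $Z\mapsto U^tZU$ with $U\in U(n)$. Hence there exist $U\in U(n)$ and $V\in U(n')$ with
\begin{equation*}
H_t(U^tZU)=V^t\,H_s(Z)\,V.
\end{equation*}
Comparing homogeneous parts of each degree gives $H_{t,j}(U^tZU)=V^t\,H_{s,j}(Z)\,V$, and I would single out the degree-one identity relating $H_{t,1}$ and $H_{s,1}$. Evaluating it at the elementary symmetric matrices $E_{ii}=e_ie_i^t$ and $E_{ij}=e_ie_j^t+e_je_i^t$, in the role played by $Z_1,Z_2$ in Corollary~\ref{non-equiv theorem2}, the positions of the $\sqrt t$ and $\sqrt{t/(2-t)}$ entries force many coordinates of $U$ and $V$ to vanish and leave a single scalar identity of the shape $\sqrt t\,\alpha=\sqrt s\,\beta$, where $\alpha$ and $\beta$ are products of entries of $U$ and $V$ that the vanishing analysis forces to have modulus one. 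Taking squared absolute values then yields $t=s$, a contradiction; letting $t$ range over $[0,1]$ produces uncountably many inequivalent maps.

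The main obstacle is the index bookkeeping in the last step. In the type~$\Rmnum{1}$ situation the isotropy acts by the two-sided action $Z\mapsto UZV$ with two independent unitaries, whereas here the single conjugation-type action $Z\mapsto U^tZU$ couples the row and column indices, so disentangling the transformation of $H_{t,1}$ and extracting one clean scalar equation is more delicate than in Corollary~\ref{non-equiv theorem2}. Once that equation is isolated, taking norms and invoking unitarity is routine.
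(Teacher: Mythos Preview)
Your proposal is correct and follows exactly the route the paper indicates: the paper's own argument for this corollary is literally the single sentence ``We can apply similar method to $h_t$ and a higher version of $h_t$,'' so your plan to restrict the type~$\Rmnum{1}$ homotopy $\tilde f_t$ to symmetric matrices, invoke Theorem~\ref{main theorem} to reduce to the isotropy action $Z\mapsto U^tZU$, and then compare linear parts at elementary symmetric matrices is precisely what is intended. Your replacement of the non-symmetric test matrix $Z_2$ from Corollary~\ref{non-equiv theorem2} by the symmetrized $E_{ij}=e_ie_j^t+e_je_i^t$ is the correct adaptation, and you have rightly flagged that the single-unitary conjugation makes the bookkeeping more coupled than in the type~$\Rmnum{1}$ case, though the endgame (unitarity forces $|\alpha|=|\beta|=1$, hence $t=s$) is unchanged.
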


\end{document}